\setlist[enumerate,1]{label=(\roman*)}
\numberwithin{equation}{section}
\crefname{subsection}{subsection}{subsections}
\tikzset{smalltext/.style={"\textup{\small #1}" description}}
\pgfplotsset{compat = newest}
\theoremstyle{plain}
\newtheorem{theorem}{Theorem}
\newtheorem{corollary}[theorem]{Corollary}
\theoremstyle{definition}
\DeclareMathOperator{\Li}{Li}
\DeclareMathOperator{\aff}{aff}
\DeclareMathOperator{\Graff}{Graff}
\newcommand{\join}{\vee}
\newcommand{\meet}{\wedge}
\newcommand{\defined}{\mathrel{\coloneqq}}
\newcommand{\divides}{\mathrel{\mid}}
\DeclareMathOperator{\ind}{\mathbf{1}}
\newcommand{\eps}{\varepsilon}
\let\SS\relax
\newcommand{\CC}{\mathbb{C}}
\newcommand{\RR}{\mathbb{R}}
\newcommand{\SS}{\mathbb{S}}
\newcommand{\cC}{\mathcal{C}}
\newcommand{\cD}{\mathcal{D}}
\newcommand{\cG}{\mathcal{G}}
\newcommand{\cI}{\mathcal{I}}
\newcommand{\cJ}{\mathcal{J}}
\newcommand{\cX}{\mathcal{X}}
\DeclareMathOperator{\Unif}{Unif}
\renewcommand{\d}{\mathop{}\!\mathrm{d}}
\newcommand{\dt}{\d t}
\newcommand{\dx}{\d x}
\newcommand{\dy}{\d y}
\newcommand{\dr}{\d r}
\newcommand{\du}{\d u}
\newcommand{\dtheta}{\d \theta}
\DeclarePairedDelimiter{\set}{\lbrace}{\rbrace}
\DeclarePairedDelimiter{\p}{\lparen}{\rparen}
\newcommand{\from}{\colon}
\newcommand{\st}{\mathbin{\colon}}
\DeclarePairedDelimiterX{\abs}[1]
  {\lvert}{\rvert}{\ifblank{#1}{\,\cdot\,}{#1}}
\DeclareMathDelimiter{\given}
  {\mathbin}{symbols}{"6A}{largesymbols}{"0C}
\DeclareMathOperator{\Prob}{\mathbb{P}}
\DeclarePairedDelimiterXPP{\prob}[1]
  {\Prob}{\lparen}{\rparen}{}
  {\renewcommand{\given}{\nonscript\;\delimsize\vert\nonscript\;\mathopen{}}#1}
\DeclareMathOperator{\Expec}{\mathbb{E}}
\DeclarePairedDelimiterXPP{\expec}[1]
  {\Expec}{\lparen}{\rparen}{}
  {\renewcommand{\given}{\nonscript\;\delimsize\vert\nonscript\;\mathopen{}}#1}
\begin{document}

\title{Intersections of random chords of a circle}

\author{Cynthia Bortolotto}
\address{Department of Mathematics, ETH Zürich, Zürich, Switzerland}
\email{cynthia.bortolotto@math.ethz.ch}

\author{Victor Souza}
\address{Department of Computer Science and Technology, and Sidney Sussex College, University of Cambridge, Cambridge, United Kingdom}
\email{vss28@cam.ac.uk}

\begin{abstract}
Where are the intersection points of diagonals of a regular $n$-gon located?
What is the distribution of the intersection point of two random chords of a circle?
We investigate these and related new questions in geometric probability, extend a largely forgotten result of Karamata, and elucidate its connection to the Bertrand paradox.
\end{abstract}

\maketitle


\section{Introduction}

Draw all the diagonals of a regular $n$-gon and consider their intersection points.
The resulting picture is as in \Cref{fig:diagonals}.
Drawings of this kind are very popular, possibly as they offer an easy method for creating symmetrical and intricate designs.

\begin{figure}[!ht]
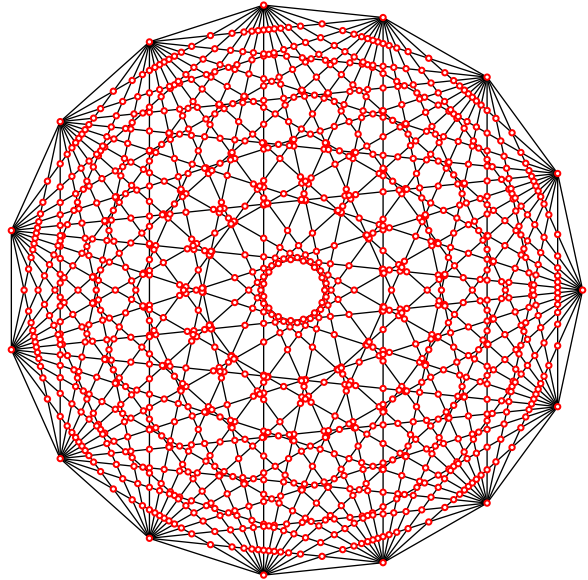

\centering

\caption{A regular 15-gon  with all diagonals and their intersection points.}
\label{fig:diagonals}
\end{figure}

Beyond its artistic applications, this drawing has been the subject of serious mathematical investigations.
Indeed, the following geometric problem was first studied by the Serbian mathematician Jovan Karamata~\cite{Karamata1962-ec} in 1962.
Which proportion of the intersection points of pairs of diagonals of a regular $n$-gon lie inside a disk $\cD_r$ of radius $r$, centered at the origin?
Karamata gave a surprising answer to this question.
Write $I_n(r)$ of the number of intersection points in inside $\cD_r$, counted with multiplicity.
Then 
\begin{align}
\label{eq:karamata-limit}
    \lim_{n \to \infty} \frac{I_n(r)}{I_n(1)} = \frac{6}{\pi^2} \Li_2(r^2), \quad \text{for } 0 \leq r \leq 1,
\end{align}
where $\Li_2$ is Euler's dilogarithm function, defined by the convergent series
\begin{equation*}
    \Li_2(x) \defined \sum_{k=0}^\infty \frac{x^k}{k^2}, \quad \text{for } 0 \leq x \leq 1.
\end{equation*}

One of the remarkable features of this answer is the surprising appearance of a special function from a rather innocent question in planar geometry.
The function $\Li_2$, also known as Spence's function, can be extended analytically to $z \in \CC \setminus [1,\infty)$ as
\begin{equation*}
    \Li_2(z) = - \int_{0}^{z} \frac{\log (1 - u)}{u} \du.
\end{equation*}
The fact that this function cannot be extended analytically in the real line beyond the point $x = 1$ can be explained in light of the original problem.
In \Cref{fig:dilog}, we plot the limit \eqref{eq:karamata-limit} as a function of $r$, together with its derivative, which indicates the local density of points that lie on a specific distance $0 \leq r \leq 1$ from the origin.
Note that the density indeed blows up at $r = 1$.

\begin{figure}[!ht]
\centering
\begin{tikzpicture}
\definecolor{col_a}{RGB}{230,97,1};
\definecolor{col_b}{RGB}{253,184,99};
\definecolor{col_c}{RGB}{178,171,210};
\definecolor{col_d}{RGB}{94,60,153};
\begin{axis}[
    set layers,
    axis line style={on layer=axis foreground, line width = 1pt},
    width=220pt,
    height=140pt,
    axis lines=left,
    xmin=0.0, xmax=1.05,
    ymin=0, ymax=3,
    xtick={0, 1},
    ytick={1.0},
    xticklabels={$0$,$1$},
    yticklabels={$1$},
    legend pos=north west,
    ymajorgrids=true,
    xmajorgrids=true,
    grid style=dashed,
]
\addplot[color = col_a, line width=1pt]
coordinates {
    (0., 0.)(0.005, 0.0000151983)(0.01, 0.0000607942)(0.015, 0.000136791)(0.02, 0.000243195)(0.025, 0.000380014)(0.03, 0.000547258)(0.035, 0.000744939)(0.04, 0.000973073)(0.045, 0.00123168)(0.05, 0.00152077)(0.055, 0.00184037)(0.06, 0.00219051)(0.065, 0.00257121)(0.07, 0.0029825)(0.075, 0.00342441)(0.08, 0.00389698)(0.085, 0.00440023)(0.09, 0.00493422)(0.095, 0.00549897)(0.1, 0.00609454)(0.105, 0.00672096)(0.11, 0.00737829)(0.115, 0.00806658)(0.12, 0.00878587)(0.125, 0.00953623)(0.13, 0.0103177)(0.135, 0.0111304)(0.14, 0.0119743)(0.145, 0.0128495)(0.15, 0.0137561)(0.155, 0.0146941)(0.16, 0.0156637)(0.165, 0.0166648)(0.17, 0.0176977)(0.175, 0.0187623)(0.18, 0.0198587)(0.185, 0.0209871)(0.19, 0.0221475)(0.195, 0.02334)(0.2, 0.0245647)(0.205, 0.0258217)(0.21, 0.0271111)(0.215, 0.028433)(0.22, 0.0297876)(0.225, 0.0311748)(0.23, 0.032595)(0.235, 0.034048)(0.24, 0.0355342)(0.245, 0.0370535)(0.25, 0.0386062)(0.255, 0.0401924)(0.26, 0.0418121)(0.265, 0.0434655)(0.27, 0.0451529)(0.275, 0.0468742)(0.28, 0.0486297)(0.285, 0.0504195)(0.29, 0.0522438)(0.295, 0.0541027)(0.3, 0.0559964)(0.305, 0.057925)(0.31, 0.0598888)(0.315, 0.0618878)(0.32, 0.0639224)(0.325, 0.0659926)(0.33, 0.0680986)(0.335, 0.0702407)(0.34, 0.072419)(0.345, 0.0746338)(0.35, 0.0768852)(0.355, 0.0791735)(0.36, 0.0814988)(0.365, 0.0838614)(0.37, 0.0862616)(0.375, 0.0886994)(0.38, 0.0911753)(0.385, 0.0936894)(0.39, 0.096242)(0.395, 0.0988332)(0.4, 0.101464)(0.405, 0.104133)(0.41, 0.106842)(0.415, 0.109591)(0.42, 0.11238)(0.425, 0.115209)(0.43, 0.118079)(0.435, 0.12099)(0.44, 0.123942)(0.445, 0.126936)(0.45, 0.129972)(0.455, 0.13305)(0.46, 0.136171)(0.465, 0.139334)(0.47, 0.142541)(0.475, 0.145792)(0.48, 0.149086)(0.485, 0.152425)(0.49, 0.155809)(0.495, 0.159238)(0.5, 0.162713)(0.505, 0.166234)(0.51, 0.169802)(0.515, 0.173416)(0.52, 0.177078)(0.525, 0.180787)(0.53, 0.184546)(0.535, 0.188352)(0.54, 0.192209)(0.545, 0.196115)(0.55, 0.200071)(0.555, 0.204079)(0.56, 0.208138)(0.565, 0.212249)(0.57, 0.216413)(0.575, 0.22063)(0.58, 0.224901)(0.585, 0.229227)(0.59, 0.233608)(0.595, 0.238044)(0.6, 0.242538)(0.605, 0.247088)(0.61, 0.251697)(0.615, 0.256364)(0.62, 0.261091)(0.625, 0.265879)(0.63, 0.270727)(0.635, 0.275638)(0.64, 0.280612)(0.645, 0.285649)(0.65, 0.290752)(0.655, 0.29592)(0.66, 0.301155)(0.665, 0.306458)(0.67, 0.311829)(0.675, 0.317271)(0.68, 0.322783)(0.685, 0.328368)(0.69, 0.334027)(0.695, 0.33976)(0.7, 0.345569)(0.705, 0.351456)(0.71, 0.357421)(0.715, 0.363467)(0.72, 0.369595)(0.725, 0.375806)(0.73, 0.382102)(0.735, 0.388485)(0.74, 0.394956)(0.745, 0.401518)(0.75, 0.408172)(0.755, 0.41492)(0.76, 0.421765)(0.765, 0.428708)(0.77, 0.435752)(0.775, 0.4429)(0.78, 0.450153)(0.785, 0.457515)(0.79, 0.464987)(0.795, 0.472574)(0.8, 0.480278)(0.805, 0.488103)(0.81, 0.496051)(0.815, 0.504126)(0.82, 0.512333)(0.825, 0.520674)(0.83, 0.529156)(0.835, 0.537781)(0.84, 0.546554)(0.845, 0.555482)(0.85, 0.564569)(0.855, 0.573821)(0.86, 0.583244)(0.865, 0.592846)(0.87, 0.602633)(0.875, 0.612614)(0.88, 0.622796)(0.885, 0.63319)(0.89, 0.643805)(0.895, 0.654652)(0.9, 0.665743)(0.905, 0.677092)(0.91, 0.688714)(0.915, 0.700624)(0.92, 0.712843)(0.925, 0.72539)(0.93, 0.73829)(0.935, 0.75157)(0.94, 0.765262)(0.945, 0.779403)(0.95, 0.794036)(0.955, 0.809216)(0.96, 0.825007)(0.965, 0.841491)(0.97, 0.858775)(0.975, 0.877001)(0.98, 0.89637)(0.985, 0.917188)(0.99, 0.939976)(0.995, 0.965839)(1., 1.)
};
\addlegendentry{$\operatorname{6 \Li_2(r^2) / \pi^2}$}
\addplot[
    domain=0:0.99,
    samples=100,
    color=col_c,
    line width=1pt]
{-12*ln(1-x^2)/(pi^2*x)};
\addlegendentry{$-12 \log(1 - r^2)/\pi^2 r$}
\end{axis}
\end{tikzpicture}
\caption{Multiple of dilogarithm function and it's derivative.}
\label{fig:dilog}
\end{figure}
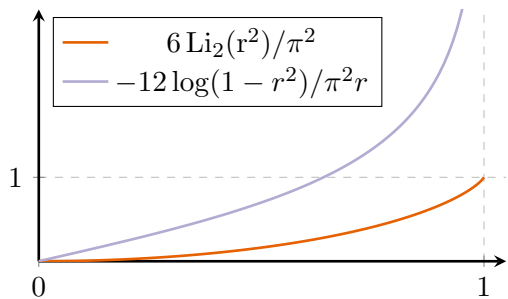

Karamata's theorem was published in a book of essays in honor of George Pólya on the occasion of his 75th birthday.
Unfortunately, this gem seems to have been overlooked ever since.
In this article, we shine some new light on his result.

One specific feature of the approach of Karamata is that intersection points are counted with multiplicity.
See \Cref{fig:diagonals-multiplicity} for an example of how arithmetical coincidences can allow for points to be hit by multiple diagonals.
\begin{figure}[!ht]
\centering
\begin{tikzpicture}
\pgfmathsetmacro{\sscale}{0.72}
\pgfmathsetmacro{\sizea}{1.5}
\pgfmathsetmacro{\sizeb}{2}
\pgfmathsetmacro{\sizec}{2.5}
\pgfmathsetmacro{\sized}{3}
\pgfmathsetmacro{\sizee}{3.2}
\definecolor{col_a}{RGB}{230,97,1};
\definecolor{col_b}{RGB}{253,184,99};
\definecolor{col_c}{RGB}{178,171,210};
\definecolor{col_d}{RGB}{94,60,153};
\tkzSetUpStyle[color=black,line width=0.5pt]{lin}
\tkzDefPoints{0/0/O, \sscale*4/0/X1}
\tkzDefPointsBy[rotation=center O angle 360/12](X1){X2}
\tkzDefPointsBy[rotation=center O angle 360/12](X2){X3}
\tkzDefPointsBy[rotation=center O angle 360/12](X3){X4}
\tkzDefPointsBy[rotation=center O angle 360/12](X4){X5}
\tkzDefPointsBy[rotation=center O angle 360/12](X5){X6}
\tkzDefPointsBy[rotation=center O angle 360/12](X6){X7}
\tkzDefPointsBy[rotation=center O angle 360/12](X7){X8}
\tkzDefPointsBy[rotation=center O angle 360/12](X8){X9}
\tkzDefPointsBy[rotation=center O angle 360/12](X9){X10}
\tkzDefPointsBy[rotation=center O angle 360/12](X10){X11}
\tkzDefPointsBy[rotation=center O angle 360/12](X11){X12}
\tkzDrawSegments[lin](X1,X2 X1,X3 X1,X4 X1,X5 X1,X6 X1,X7 X1,X8 X1,X9 X1,X10 X1,X11 X1,X12)
\tkzDrawSegments[lin](X2,X3 X2,X4 X2,X5 X2,X6 X2,X7 X2,X8 X2,X9 X2,X10 X2,X11 X2,X12)
\tkzDrawSegments[lin](X3,X4 X3,X5 X3,X6 X3,X7 X3,X8 X3,X9 X3,X10 X3,X11 X3,X12)
\tkzDrawSegments[lin](X4,X5 X4,X6 X4,X7 X4,X8 X4,X9 X4,X10 X4,X11 X4,X12)
\tkzDrawSegments[lin](X5,X6 X5,X7 X5,X8 X5,X9 X5,X10 X5,X11 X5,X12)
\tkzDrawSegments[lin](X6,X7 X6,X8 X6,X9 X6,X10 X6,X11 X6,X12)
\tkzDrawSegments[lin](X7,X8 X7,X9 X7,X10 X7,X11 X7,X12)
\tkzDrawSegments[lin](X8,X9 X8,X10 X8,X11 X8,X12)
\tkzDrawSegments[lin](X9,X10 X9,X11 X9,X12)
\tkzDrawSegments[lin](X10,X11 X10,X12)
\tkzDrawSegments[lin](X11,X12)
\foreach \x / \y in {
0.845/0.845,0.309/1.155,1.155/0.309,1.000/1.732,1.732/1.000,0.000/2.000,2.000/0.000,2.392/0.928,2.536/0.392,0.392/2.536,1.608/2.000,0.928/2.392,2.000/1.608,2.619/1.155,2.845/0.309,1.691/2.309,0.309/2.845,3.000/0.268,2.732/1.268,2.464/1.732,1.732/2.464,0.268/3.000,1.268/2.732,3.072/0.536,2.928/1.072,1.072/2.928,2.000/2.392,0.536/3.072,2.392/2.000,2.309/2.309,3.155/0.845,0.845/3.155,3.000/1.732,3.464/0.000,1.732/3.000,0.000/3.464,0.144/3.464,1.856/2.928,3.464/0.144,3.072/1.608,1.608/3.072,2.928/1.856,2.000/2.845,2.845/2.000,0.309/3.464,3.155/1.464,1.464/3.155,3.464/0.309,0.536/3.464,1.268/3.268,3.464/0.536,3.268/1.268,2.196/2.732,2.732/2.196,0.928/3.464,2.536/2.536,3.464/0.928,2.309/1.690,1.154/2.618
}{
    \filldraw[xshift=\sscale*\x cm, yshift=\sscale*\y cm, black, line width=0.8pt, fill=Cerulean] (0,0) circle (\sizea pt);
    \filldraw[xshift=\sscale*\x cm, yshift=-\sscale*\y cm, black, line width=0.8pt, fill=Cerulean] (0,0) circle (\sizea pt);
    \filldraw[xshift=-\sscale*\x cm, yshift=\sscale*\y cm, black, line width=0.8pt, fill=Cerulean] (0,0) circle (\sizea pt);
    \filldraw[xshift=-\sscale*\x cm, yshift=-\sscale*\y cm, black, line width=0.8pt, fill=Cerulean] (0,0) circle (\sizea pt);
}
\foreach \x / \y in {
0.536/0.928,0.928/0.536,0.000/1.072,1.072/0.000,1.268/0.732,1.464/0.000,0.732/1.268,0.000/1.464,2.000/1.155,1.155/2.000,0.000/2.309,2.309/0.000,0.732/2.732,2.732/0.732,2.000/2.000,2.928/0.000,1.464/2.536,2.536/1.464,0.000/2.928
}{
    \filldraw[xshift=\sscale*\x cm, yshift=\sscale*\y cm, black, line width=0.8pt, fill=LimeGreen] (0,0) circle (\sizeb pt);
    \filldraw[xshift=\sscale*\x cm, yshift=-\sscale*\y cm, black, line width=0.8pt, fill=LimeGreen] (0,0) circle (\sizeb pt);
    \filldraw[xshift=-\sscale*\x cm, yshift=\sscale*\y cm, black, line width=0.8pt, fill=LimeGreen] (0,0) circle (\sizeb pt);
    \filldraw[xshift=-\sscale*\x cm, yshift=-\sscale*\y cm, black, line width=0.8pt, fill=LimeGreen] (0,0) circle (\sizeb pt);
}
\foreach \x / \y in {
0.536/2.000,2.000/0.536,1.464/1.464
}{
    \filldraw[xshift=\sscale*\x cm, yshift=\sscale*\y cm, black, line width=0.8pt, fill=Goldenrod] (0,0) circle (\sizec pt);
    \filldraw[xshift=-\sscale*\x cm, yshift=\sscale*\y cm, black, line width=0.8pt, fill=Goldenrod] (0,0) circle (\sizec pt);
    \filldraw[xshift=\sscale*\x cm, yshift=-\sscale*\y cm, black, line width=0.8pt, fill=Goldenrod] (0,0) circle (\sizec pt);
    \filldraw[xshift=-\sscale*\x cm, yshift=-\sscale*\y cm, black, line width=0.8pt, fill=Goldenrod] (0,0) circle (\sizec pt);
}
\filldraw[black, line width=0.8pt, fill=Orange] (0,0) circle (\sized pt);
\foreach \x / \y in {
2.000/3.464,3.464/2.000,0.000/4.000,4.000/0.000
}{
    \filldraw[xshift=\sscale*\x cm, yshift=\sscale*\y cm, black, line width=0.8pt, fill=RubineRed] (0,0) circle (\sizee pt);
    \filldraw[xshift=\sscale*\x cm, yshift=-\sscale*\y cm, black, line width=0.8pt, fill=RubineRed] (0,0) circle (\sizee pt);
    \filldraw[xshift=-\sscale*\x cm, yshift=\sscale*\y cm, black, line width=0.8pt, fill=RubineRed] (0,0) circle (\sizee pt);
    \filldraw[xshift=-\sscale*\x cm, yshift=-\sscale*\y cm, black, line width=0.8pt, fill=RubineRed] (0,0) circle (\sizee pt);
}
\end{tikzpicture}
\caption{Intersection points of diagonals of a regular 12-gon according to their multiplicity.}
\label{fig:diagonals-multiplicity}
\end{figure}
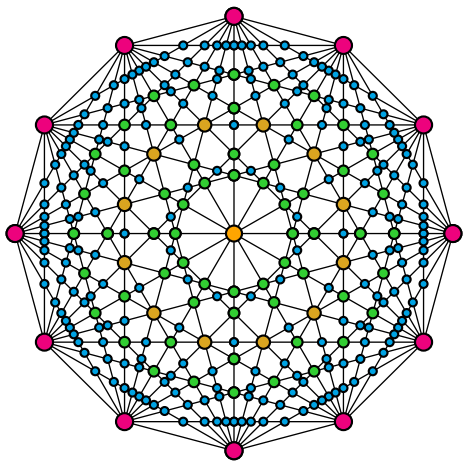

\subsection{Intersection points of diagonals}

The study of the intersection points of diagonals of regular polygons has a rich history in itself.
If the vertices of an $n$-gon are placed in generic convex position, then there would be $\binom{n}{4}$ intersection points of diagonals as every four points would be the endpoints of a unique pair of intersecting diagonals.
If the vertices are placed so that the resulting $n$-gon is regular, then the number of intersection points may be less than $\binom{n}{4}$.
Indeed, if $n$ is even, then the center of the $n$-gon is in $n/2$ diagonals.
In 1958, Steinhaus~\cite{Steinhaus1958-ry} posed the problem of showing that if $n$ is prime, then no three diagonals have a common intersection point, which was then shown by Croft and Fowler~\cite{Croft1961-be} in 1961.
In the next year, Heineken~\cite{Heineken1962-oe} proved that the same holds for $n$ odd.
Quite a while later, a remarkable explicit formula was found by Poonen and Rubinstein~\cite{Poonen1998-sp}, in 1998.
They proved that the number of intersections points of diagonals of a regular $n$-gon is given by
\begin{align*}
    \binom{n}{4}
    &- \p[\Big]{\tfrac{5n^3 - 45n^2 + 70n - 24}{24}} \ind_{2 \divides n}
    - \p[\Big]{\tfrac{3 n}{2}} \ind_{4 \divides n} \\
    &- \p[\Big]{\tfrac{45 n^2 - 262 n}{6}} \ind_{6 \divides n}
    + \p[\big]{42 n} \ind_{12 \divides n}
    + \p[\big]{60 n} \ind_{18 \divides n} \\
    &+ \p[\big]{35 n} \ind_{24 \divides n}
    - \p[\big]{38 n} \ind_{30 \divides n}
    - \p[\big]{82 n} \ind_{42 \divides n}
    - \p[\big]{330 n} \ind_{60 \divides n} \\
    &- \p[\big]{144 n} \ind_{84 \divides n}
    - \p[\big]{96 n} \ind_{90 \divides n}
    - \p[\big]{144 n} \ind_{120 \divides n}
    - \p[\big]{96 n} \ind_{210 \divides n},
\end{align*}
where $\ind_{k \divides n}$ is equal to $1$ if $k$ divides $n$ and $0$ otherwise.
This corrects an early and forgotten result of Bol~\cite{Bol1936-jn} from 1936, see~\cite{Poonen1998-sp} for more historical remarks.
While the expression above is not quite a polynomial, it becomes a polynomial when restricted to each residue class modulo $2520$.
Poonen and Rubinstein also obtained a similar expression for the number of regions formed by the diagonals formed by a regular $n$-gon, and for the number of points that are intersection points of exactly three diagonals.

It can also be read immediately from this formula that the limit \eqref{eq:karamata-limit} holds if the points are counted without multiplicity, since the number of them that occur with higher multiplicity are of order smaller than $n^4$.

\subsection{Extending Karamata's result}

What happens if instead of drawing the diagonals as segments between the vertices of a regular $n$-gon, we draw the lines between the vertices.
That is, extend the diagonals to infinite lines in the plane.
Now, each pair of lines intersect unless they are parallel.
This intersection point do not need to be contained in the interior of the $n$-gon, but it could be anywhere in the plane.
See \Cref{fig:points-outside} for an example.
Which proportion of these intersection points now lie inside a disc or radius $r$ centered at the origin, for all $r \geq 0$?

If $r = 1$, then the answer must be roughly $1/3$, as given any four vertices of the $n$-gon, from the three possible pairings of the diagonals, only one pair intersects inside the disc.
It is reasonable to expect then the answer to be
\begin{equation*}
    \frac{2}{\pi^2} \Li_2(r^2)
\end{equation*}
for $0 \leq r \leq 1$.
While that is true, this gives us no real clue to what happens when $r > 1$, since the dilogarithm function has no analytic extension to $\RR$.

\begin{figure}[!ht]
\centering
\begin{tikzpicture}
\pgfmathsetmacro{\sscale}{0.75}
\definecolor{col_a}{RGB}{230,97,1};
\definecolor{col_b}{RGB}{253,184,99};
\definecolor{col_c}{RGB}{178,171,210};
\definecolor{col_d}{RGB}{94,60,153};
\tkzSetUpStyle[color=black,line width=0.5pt,add = 5 and 5]{lin}
\clip (-5,-5) rectangle (5,5);
\tkzDefPoints{0/0/O, \sscale*4/0/X1}
\tkzDefPointsBy[rotation=center O angle 360/11](X1){X2}
\tkzDefPointsBy[rotation=center O angle 360/11](X2){X3}
\tkzDefPointsBy[rotation=center O angle 360/11](X3){X4}
\tkzDefPointsBy[rotation=center O angle 360/11](X4){X5}
\tkzDefPointsBy[rotation=center O angle 360/11](X5){X6}
\tkzDefPointsBy[rotation=center O angle 360/11](X6){X7}
\tkzDefPointsBy[rotation=center O angle 360/11](X7){X8}
\tkzDefPointsBy[rotation=center O angle 360/11](X8){X9}
\tkzDefPointsBy[rotation=center O angle 360/11](X9){X10}
\tkzDefPointsBy[rotation=center O angle 360/11](X10){X11}
\tkzDrawSegments[lin](X1,X2 X1,X3 X1,X4 X1,X5 X1,X6 X1,X7 X1,X8 X1,X9 X1,X10 X1,X11)
\tkzDrawSegments[lin](X2,X3 X2,X4 X2,X5 X2,X6 X2,X7 X2,X8 X2,X9 X2,X10 X2,X11)
\tkzDrawSegments[lin](X3,X4 X3,X5 X3,X6 X3,X7 X3,X8 X3,X9 X3,X10 X3,X11)
\tkzDrawSegments[lin](X4,X5 X4,X6 X4,X7 X4,X8 X4,X9 X4,X10 X4,X11)
\tkzDrawSegments[lin](X5,X6 X5,X7 X5,X8 X5,X9 X5,X10 X5,X11)
\tkzDrawSegments[lin](X6,X7 X6,X8 X6,X9 X6,X10 X6,X11)
\tkzDrawSegments[lin](X7,X8 X7,X9 X7,X10 X7,X11)
\tkzDrawSegments[lin](X8,X9 X8,X10 X8,X11)
\tkzDrawSegments[lin](X9,X10 X9,X11)
\tkzDrawSegments[lin](X10,X11)
\foreach \x / \y in {
4.691/2.353,3.365/9.440,-7.107/3.959,2.674/0.191,-0.569/-1.597,-1.293/-2.832,1.662/-3.639,-2.619/3.023,2.730/0.000,-3.838/1.127,-2.242/1.356,-6.176/4.765,4.930/-3.169,5.943/-3.023,0.499/0.321,-1.732/-0.000,-4.850/10.621,-1.984/5.186,2.353/-1.285,-3.838/-1.127,2.988/3.448,-3.838/-1.127,0.096/4.995,-4.323/4.499,5.415/9.145,-4.504/-2.163,-1.984/0.860,-2.273/-9.760,4.930/-8.725,3.128/-1.356,4.089/3.289,3.365/10.081,4.930/14.771,1.662/-3.639,9.500/-4.765,-4.850/4.575,4.584/-1.987,2.146/0.266,1.662/-3.639,2.533/-0.670,-2.908/-7.598,2.493/-0.806,-1.788/-5.855,0.246/0.540,-4.850/-2.004,-0.389/-3.344,-1.401/1.127,-2.273/-2.483,-2.619/0.769,-0.569/3.959,-8.119/7.788,2.533/2.282,-3.838/3.579,-2.619/1.302,-3.838/6.792,-2.950/1.896,-1.024/5.506,-14.392/-17.549,-3.838/-2.848,6.634/0.670,3.893/-3.959,-4.768/-0.321,-6.554/-9.145,-2.273/9.760,-0.569/3.959,9.349/-3.023,2.493/0.806,-5.888/-8.109,2.781/4.150,0.937/2.512,0.066/1.797,-5.888/2.063,-0.042/-2.163,5.596/-5.435,-4.958/0.616,-1.401/-1.127,10.915/-13.910,12.618/2.063,-3.838/22.369,-3.507/0.000,-11.676/0.000,0.066/3.409,-21.942/-5.801,15.484/17.870,-2.619/7.149,5.703/-5.801,-2.619/-3.023,0.499/-3.471,5.415/5.353,7.934/1.797,-5.164/0.936,-7.189/-6.982,4.377/1.285,1.284/2.353,1.315/-9.145,3.712/-3.344,-7.772/-2.282,2.297/-1.476,8.281/6.662,12.618/-2.063,-1.204/-6.122,0.096/2.924,2.699/1.127,8.865/2.603,-4.768/0.321,-11.526/-5.535,-2.950/-1.896,-1.984/-2.733,-12.053/-13.910,-1.607/1.448,-2.242/-1.356,-1.153/2.353,-4.562/0.000,5.596/-2.483,-15.322/16.743,2.533/-2.282,1.662/5.892,7.119/-2.702,1.134/-1.842,12.133/-9.760,-3.838/-1.127,3.365/-2.163,9.984/-20.381,2.245/1.651,1.662/-7.964,0.897/-6.241,-2.619/3.023,-0.569/-0.657,0.499/-0.321,0.649/-0.191,-1.024/3.252,-0.834/1.476,-0.569/-3.959,1.662/-2.407,-17.660/-0.860,-1.293/1.493,-14.392/-5.947,-6.704/-2.436,-3.838/12.196,-1.153/0.741,-1.689/-2.217,4.930/-6.471,-0.569/-3.959,3.893/3.959,5.943/3.023,-3.203/1.036,-0.124/-0.860,-1.293/2.832,-4.504/2.163,-1.607/-0.806,-2.619/-7.149,22.687/0.616,3.365/-2.163,1.315/2.457,-5.305/-3.409,-0.084/0.587,0.996/-2.603,-2.423/-2.353,9.984/20.381,-7.772/7.249,3.712/-10.621,3.365/0.091,-0.569/-0.366,-6.176/-2.512,-3.114/0.000,2.435/-1.356,-1.954/1.987,2.674/4.516,-0.569/-2.728,0.897/-1.036,0.361/4.765,3.365/-2.163,4.930/-1.448,-3.838/-4.429,2.781/1.896,-2.718/-0.616,-8.119/15.707,0.731/-2.832,-5.888/8.109,-6.069/1.448,-0.281/1.955,-15.322/-8.824,-2.619/-0.071,-2.619/5.736,-3.838/-12.196,3.365/-2.163,9.984/5.186,6.050/0.936,1.134/1.309,4.584/3.218,4.485/1.651,-7.592/-0.587,2.988/-0.877,3.365/4.416,-6.069/-4.598,0.499/-7.598,-3.838/1.127,-1.984/2.733,-14.392/17.549,-2.619/4.254,0.443/5.535,-1.500/-2.512,-2.135/2.603,-1.788/-2.903,-1.689/2.217,1.315/-2.457,8.281/-6.662,1.662/11.557,1.898/-2.832,1.315/1.226,-14.392/10.272,2.880/-3.814,-3.838/-4.079,-3.838/4.429,1.134/-5.435,7.646/1.547,-0.389/-0.448,4.930/1.448,1.662/3.639,-2.619/-0.570,-4.185/2.308,0.195/1.356,-0.389/-10.621,-2.619/-3.023,-0.569/10.005,9.500/2.512,16.703/5.801,-3.838/-1.127,3.128/1.356,-9.822/1.987,-1.054/2.308,1.662/-0.336,-2.619/-18.219,3.365/3.883,-0.569/-3.959,-1.788/-2.063,15.484/9.951,-6.554/16.422,18.753/12.783,-0.569/-15.561,-2.619/-22.544,-2.135/1.372,0.096/-0.670,-10.457/1.896,0.443/2.583,6.231/-0.321,-1.607/-1.448,0.731/5.086,3.365/0.988,-8.407/-5.086,6.634/-7.249,-15.322/2.778,-3.838/-1.127,-5.541/-2.603,1.284/-2.353,-5.541/0.349,-21.595/-6.982,4.930/-14.771,7.119/10.621,9.500/-2.512,-1.895/0.556,-9.822/0.266,-1.689/4.471,-0.569/-5.572,8.865/20.893,-7.107/-3.959,-0.569/-3.959,-4.473/-3.289,4.485/-1.651,-6.901/-3.639,5.068/3.639,2.146/-1.606,-0.569/-3.959,-2.619/-3.023,2.146/-1.987,-2.619/-3.023,-0.569/-3.959,-2.619/0.952,-5.623/-1.651,1.315/-1.226,1.134/2.483,-0.569/1.247,-0.569/-3.959,7.646/-16.743,0.731/-5.086,1.769/1.933,-0.569/7.110,-3.838/3.198,3.365/2.163,-0.569/1.705,7.934/-1.797,-8.119/-7.788,-0.192/-5.245,3.712/10.621,11.203/3.289,-4.850/-2.702,1.662/-3.639,-8.892/12.783,2.549/6.662,-3.838/-3.579,-8.119/0.511,-2.135/-2.603,-3.838/1.127,-1.500/-4.765,-3.838/-6.792,12.215/12.783,-0.084/-0.587,5.861/0.000,7.646/2.778,0.096/1.693,-2.135/-0.349,0.869/0.000,6.231/3.471,-25.876/7.598,-0.569/3.318,-7.772/-0.670,1.662/-7.431,-17.660/-20.381,-7.772/2.282,-6.176/-4.765,1.975/0.000,-1.204/1.797,-0.569/-11.236,2.699/-1.127,-0.281/-1.955,2.297/1.476,7.646/-2.778,1.134/-1.309,0.302/-2.603,-1.457/-6.982,7.119/0.448,0.443/0.511,-4.323/-8.824,-1.500/4.765,-0.569/-2.087,-2.718/0.616,-3.491/4.379,-2.619/-0.952,0.443/1.742,-0.281/-6.662,2.674/-0.191,5.746/4.226,5.596/2.483,-2.619/7.348,-14.392/5.947,-9.822/-21.508,1.662/7.431,-0.222/-2.778,7.646/-8.824,3.365/-3.394,3.365/-10.081,1.662/-11.557,-10.457/-4.150,-15.322/-16.743,2.353/1.285,3.365/5.756,1.662/3.639,1.580/0.616,0.821/-1.797,-5.434/-1.356,-3.074/1.476,6.814/4.379,0.593/-0.000,3.365/-2.163,5.596/0.229,0.443/-7.788,-3.838/-1.127,15.484/-1.651,1.216/1.181,-14.392/-10.272,-6.704/-3.610,-2.619/-1.302,-0.569/-3.959,1.662/1.068,4.000/0.000,-5.305/-1.797,1.662/7.964,-23.645/0.000,1.662/-1.918,-0.916/-1.547,-1.689/-4.471,7.449/-1.575,6.634/-0.670,5.068/-3.639,19.418/-11.748,1.662/-0.488,22.687/-14.580,2.435/-0.715,2.146/6.312,-0.916/1.547,-6.069/0.806,9.500/-8.558,-1.293/-1.493,1.662/17.603,-3.838/-26.694,-4.473/3.289,1.662/-1.385,4.666/-1.036,-0.569/-3.959,3.365/-17.358,2.245/-1.651,1.769/-1.933,8.865/-6.928,0.246/-0.540,-6.554/-16.422,4.930/8.725,-0.281/0.616,5.596/9.760,-1.895/1.896,0.443/7.788,-8.119/5.535,2.146/1.987,-0.569/-3.959,6.231/7.598,4.930/-0.806,1.315/-3.099,5.219/-1.896,-6.554/1.226,-2.135/-1.372,-1.607/-6.471,-2.423/2.353,0.384/1.651,1.134/5.435,1.481/-3.023,0.996/2.603,-2.619/-3.023,-0.222/2.778,-1.895/-4.150,26.968/-0.000,-0.281/-0.616,-1.342/-1.036,-0.192/-2.674,-3.688/-6.662,-2.619/1.683,-17.660/20.381,-2.273/4.204,-4.323/5.673,4.691/-2.353,-1.024/-3.252,-2.512/-0.936,3.365/-2.163,-3.838/-15.091,0.096/0.670,1.134/9.760,-8.892/-12.783,3.365/-2.163,0.066/6.122,-6.069/-8.725,22.687/14.580,-7.772/0.670,3.365/-2.163,-12.688/-1.575,3.365/-5.114,0.096/-1.693,0.499/3.471,6.050/1.776,0.649/4.516,-0.389/2.702,1.662/0.336,8.865/-20.893,10.915/13.910,-2.619/4.896,2.674/-2.063,-1.788/0.191,-6.704/2.436,1.662/1.385,8.100/0.000,-0.569/-3.959,5.746/-4.226,-3.838/26.694,1.769/-7.598,-2.908/0.321,-0.916/2.778,-6.957/-1.575,4.584/6.312,15.484/-9.951,0.384/-1.651,19.418/11.748,3.365/-0.290,2.880/-5.535,-25.876/-7.598,6.050/6.982,7.934/13.399,-4.850/-4.575,12.133/9.760,-7.592/0.587,-5.541/-0.349,-0.569/-6.213,0.897/1.036,5.219/-4.150,0.066/-1.797,3.365/-2.163,8.865/6.928,-1.457/-0.936,1.662/0.488,1.134/1.842,7.934/6.122,4.930/6.471,3.365/-7.368,4.584/4.059,3.365/-2.163,5.596/-9.760,-3.838/1.127,1.134/-9.760,-0.677/0.000,-1.457/-3.190,-3.838/1.127,3.365/-2.163,7.449/1.575,2.297/5.801,0.821/1.797,9.349/3.023,-2.273/-4.204,-9.338/-3.639,2.533/-4.995,-4.185/-2.308,-3.838/-3.198,-0.569/-3.959,7.269/-2.832,1.315/4.820,1.662/-3.639,-6.554/9.145,-1.293/5.086,-6.704/3.610,-0.569/-3.959,17.187/-4.150,5.219/-0.556,-3.838/1.127,-11.388/10.621,1.950/0.936,-1.457/0.936,-6.069/-0.806,0.066/-6.122,9.984/-5.186,11.203/-3.289,-1.054/-2.308,5.415/-1.226,5.703/5.801,1.134/3.181,0.499/7.598,3.365/-23.404,2.674/-4.516,-17.660/0.860,-0.569/-1.705,2.116/-5.186,-3.310/0.670,0.361/2.512,7.646/8.824,1.134/-2.483,1.027/-1.476,-6.554/-1.226,1.662/-3.639,0.443/-0.511,0.897/6.241,1.662/1.918,0.649/0.191,1.315/-0.386,6.050/-6.982,-0.569/-10.005,-5.305/6.122,-1.788/2.903,9.500/8.558,-1.788/-0.191,-1.954/-1.987,22.687/-0.616,9.984/-0.860,1.481/3.023,-10.457/4.150,-2.273/2.483,2.435/0.715,22.687/-6.662,3.712/3.344,-12.688/1.575,-1.607/6.471,0.649/-4.516,-12.053/13.910,-3.838/9.046,3.365/5.114,-2.619/-4.254,11.203/24.531,3.365/-9.440,2.435/1.356,-0.766/3.289,3.365/7.368,-19.891/-12.783,-1.204/6.122,-2.908/-0.321,-7.772/4.995,-2.619/-10.942,-5.434/1.356,-2.619/-3.023,-9.822/-4.059,3.365/0.290,1.662/0.687,-4.958/-0.616,-9.822/4.059,0.996/-1.372,-5.541/2.603,-1.607/-2.146,3.365/-0.988,-2.619/-10.300,-3.838/2.848,-2.908/7.598,-3.147/-1.226,-0.834/5.801,0.996/-4.674,4.528/6.122,-15.322/8.824,1.662/-3.639,-3.074/-1.476,-0.192/2.674,-9.239/-0.000,6.231/0.321,2.988/0.877,1.662/-3.639,-9.822/-1.987,-0.569/1.597,3.365/-2.163,-0.569/0.167,-3.384/-0.420,-3.838/15.091,7.646/16.743,3.365/12.335,1.027/1.476,18.753/-12.783,-3.838/-8.404,-0.569/-1.247,-7.189/6.982,9.984/0.860,-2.619/10.300,7.269/2.832,1.662/-3.639,1.662/-17.603,1.315/9.145,-2.537/0.000,-0.569/11.236,0.937/-2.512,2.988/1.575,-3.838/4.079,3.365/-2.163,-7.772/-7.249,0.731/2.832,-3.838/1.127,3.365/-6.289,6.231/-7.598,2.435/5.331,-6.554/4.820,1.662/-3.639,-3.838/2.467,-2.165/-1.476,-2.619/3.023,-15.322/-2.778,1.662/-1.068,5.596/5.435,2.297/-5.801,-3.203/-1.036,-0.569/2.620,2.880/3.814,1.662/-3.639,6.050/-1.776,-6.069/-1.448,3.365/-2.163,0.302/2.603,0.443/-3.082,-2.619/-7.348,0.361/-0.791,4.584/-3.218,4.930/3.169,-4.323/8.824,11.731/5.086,11.203/-24.531,-0.389/10.621,-0.834/-5.801,-6.901/3.639,-0.192/-3.372,3.365/6.289,-8.119/-0.511,-1.984/-0.860,2.781/-0.175,-2.619/22.544,-3.838/4.919,0.246/-12.783,0.897/3.289,7.934/-13.399,4.584/-4.059,3.365/-12.335,-0.124/0.860,6.050/-0.936,1.662/-3.639,-0.569/15.561,8.865/-2.603,0.443/3.082,-2.619/-0.769,2.880/-1.742,-11.041/2.163,-17.660/5.186,0.897/-3.289,4.930/0.806,-4.670/3.959,4.129/4.765,22.687/6.662,-1.153/-8.018,-1.689/0.146,-0.569/-0.167,3.365/17.358,2.039/-2.353,3.365/3.394,-11.041/-2.163,-1.500/2.512,13.919/6.982,0.246/1.714,0.649/8.109,-1.024/-2.412,3.365/-5.756,2.146/-0.266,2.146/-6.312,-3.310/-0.670,4.584/-6.312,-9.822/8.185,1.662/-3.639,-0.042/2.163,-0.569/-3.959,-2.135/4.674,1.315/0.386,-3.838/-1.127,-0.569/6.213,-2.619/0.071,1.898/2.832,0.361/-2.512,4.832/2.832,5.415/1.226,5.219/4.150,1.662/-4.978,-2.619/18.219,1.662/-3.639,-6.554/-4.820,1.662/-2.026,-0.569/2.728,-0.569/-3.318,-0.192/3.372,-3.384/0.420,-0.916/-2.778,-9.822/-8.185,-3.838/-9.046,12.215/-12.783,1.315/3.099,-3.838/-22.369,-6.069/4.598,-4.670/-3.959,7.119/2.702,-0.281/6.662,-1.204/-1.797,-6.957/1.575,15.484/-17.870,-2.997/-4.308,-2.619/-5.736,-4.850/2.004,3.365/-0.091,1.662/-2.940,5.415/-3.099,11.203/1.036,-5.305/3.409,3.365/-0.550,2.039/2.353,5.415/-5.353,-4.850/-10.621,-5.139/0.000,1.580/-0.616,-2.273/-1.842,-1.788/5.855,-2.165/1.476,0.649/-8.109,-0.569/3.959,-2.619/-3.023,4.832/-2.832,9.500/4.765,1.662/-3.639,13.919/-6.982,3.365/23.404,3.365/-2.163,-4.323/2.778,17.187/4.150,5.415/-9.145,3.365/0.550,5.415/3.099,1.216/-1.181,-2.619/-1.683,-1.457/3.190,-2.135/0.349,-3.491/-4.379,-21.595/6.982,-0.766/-3.289,1.662/-9.685,-8.119/-15.707,0.066/-3.409,-2.273/1.842,-5.001/-5.086,-0.569/-7.110,15.484/1.651,0.443/-2.583,0.361/-4.765,-0.834/0.245,1.662/-3.639,-1.895/4.150,1.662/9.685,-2.619/10.942,-3.838/8.404,-3.838/-2.467,11.731/-5.086,2.781/0.175,-3.838/-5.452,-0.834/-0.245,-9.822/-0.266,2.880/0.511,16.703/-5.801,-17.660/-5.186,3.365/-3.883,-3.254/-5.186,1.769/7.598,11.203/-1.036,-3.838/1.127,-3.838/-7.173,-2.997/4.308,-1.895/-0.556,1.769/-0.321,1.950/-0.936,-1.153/-2.353,-10.457/-1.896,-5.623/1.651,8.865/-4.674,-0.569/3.959,0.096/-4.995,5.219/1.896,-0.569/2.087,7.119/-0.448,-0.834/-1.476,-11.388/-10.621,6.814/-4.379,0.246/-1.714,7.646/-1.547,6.634/7.249,0.246/12.783,2.880/1.742,5.596/-0.229,2.116/5.186,-3.254/0.860,1.662/2.407,0.443/-1.742,3.365/-2.163,-1.788/2.063,-0.569/-2.620,8.865/4.674,-3.838/-1.127,-1.500/-0.791,-0.389/3.344,-0.569/0.657,-1.500/0.791,-1.457/6.982,-0.192/5.245,-6.704/10.887,-3.838/-1.127,2.781/-1.896,-21.942/5.801,-0.719/-1.575,-3.838/-1.127,-1.024/-5.506,-9.822/6.312,1.662/4.978,-4.850/2.702,-3.838/-1.127,3.365/-2.163,2.988/-3.448,7.934/-6.122,1.662/-5.892,4.666/1.036,2.880/-0.511,-0.719/1.575,-2.619/0.570,-1.607/2.146,-2.135/-4.674,-6.069/8.725,0.731/0.470,0.649/2.063,-2.512/0.936,0.731/-0.470,-5.164/-0.936,-1.607/0.806,-0.569/-3.959,2.533/0.670,-9.338/3.639,2.533/4.995,3.365/-4.416,-3.838/5.452,4.089/-3.289,-0.569/0.366,0.195/-1.356,6.231/-3.471,1.662/-3.639,1.769/0.321,-7.772/-4.995,10.568/-1.127,-4.323/-5.673,-5.305/1.797,2.297/2.650,7.119/-10.621,0.649/-2.063,5.219/0.556,-4.323/-2.778,-2.619/-3.023,-3.254/-0.860,-1.370/0.000,-1.153/8.018,10.568/1.127,-2.619/-3.023,-1.153/-0.741,-1.342/1.036,-1.984/-5.186,-3.688/6.662,2.146/1.606,-8.407/5.086,0.096/-2.924,-8.119/-5.535,-6.704/-10.887,-6.306/0.000,1.662/-0.687,-2.619/-3.023,-11.526/5.535,-1.024/2.412,4.528/-6.122,1.662/2.940,-2.619/-3.023,-9.822/-6.312,-0.389/-2.702,-3.147/1.226,-2.619/-3.023,2.880/5.535,4.584/1.987,0.361/0.791,0.996/4.674,0.443/-5.535,4.377/-1.285,-1.293/-5.086,2.781/-4.150,-2.619/-4.896,-3.838/1.127,-0.569/5.572,18.406/0.000,-3.254/5.186,1.134/-3.181,-9.822/21.508,2.988/-1.575,0.996/1.372,-2.619/2.183,-5.888/-2.063,-3.838/7.173,2.674/2.063,1.662/2.026,2.435/-5.331,-1.689/-0.146,-5.305/-6.122,-6.176/2.512,2.549/-6.662,-5.001/5.086,2.297/-2.650,-2.619/-2.183,-1.895/-1.896,-4.323/-4.499,1.315/-4.820,-3.838/-4.919,4.129/-4.765,-19.891/12.783,-0.389/0.448
}\filldraw[xshift=\sscale*\x cm, yshift=\sscale*\y cm, red, fill=white, line width=0.75pt] (0,0) circle (1pt);
\end{tikzpicture}
\caption{A regular 11-gon with intersection points of diagonal lines, some outside the circumcircle.}
\label{fig:points-outside}
\end{figure}
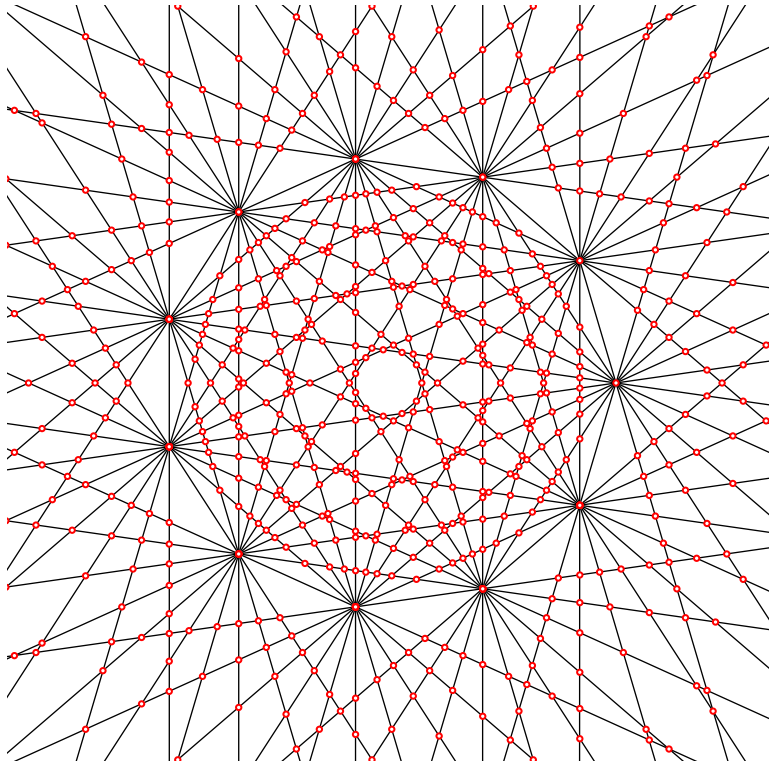

To study this problem, we actually consider an alternative formulation in the lens of \emph{geometric probability}.
Let $\cC \subseteq \RR^2$ be the unit circle centered at the origin and let $X_1$, $X_1'$, $X_2$ and $X_2'$ be independent random points chosen uniformly from $\cC$.
Let $L_1$ be the line through $X_1$ and $X_1'$ and $L_2$ be the line through $X_2$ and $X_2'$.
Then $L_1$ and $L_2$ intersect with probability $1$ in a unique point, $P$.
What is the distribution of $P$?
To make this connection with the discrete model of the $n$-gon more direct, imagine the following experiment.
Place a regular $n$-gon inscribed in the unit circle centered at the origin in the plane.
Select four vertices of $x_1$, $x_1'$, $x_2$ and $x_2'$ of the $n$-gon uniformly and independently at random.
If $n$ is large, it is indeed very likely that they are all distinct.
Now look at the lines $\ell_1$ through $x_1$ and $x_1'$, and $\ell_2$ through $x_2$ and $x_2'$.
Then $\ell_1$ and $\ell_2$ are not parallel with probability very high probability, in which case, there is a unique point $p \in \RR$ that is the intersection of $\ell_1$ and $\ell_2$.
Asking for the probability that $p$ lies in a disc of radius $0 \leq r \leq 1$ centered at the origin is then, up to lower order terms, is equivalent to computing the limit in \eqref{eq:karamata-limit}, once we correct for the fact that $p$ lies outside of the unit disc with probability approaching $2/3$.

Our main result is the following extension of Karamata's theorem.

\begin{theorem}
\label{thm:karamata}
Let $X_1$, $X_1'$, $X_2$ and $X_2'$ be points in drawn independently and uniformly from the unit circle centered at the origin.
Let $P$ be the intersection point of the line through $X_1$ and $X_1'$ with the line through $X_2$ and $X_2'$.
If we write $\ell$ for the distance of $P$ to the origin, then we have
\begin{align}
\label{eq:karamata-extended}
    \prob[\big]{\ell \leq r} = \frac{16}{\pi^3}\int_0^{\min\set{1,r}} \frac{ \arccos(t/r) \arcsin(t)}{\sqrt{1-t^2}} \dt,
\end{align}
for any $r > 0$.
In particular, for $0 \leq r \leq 1$, it holds that
\begin{align*}
    \prob[\big]{\ell \leq r} = \frac{2}{\pi^2} \Li_2(r^2).
\end{align*}
\end{theorem}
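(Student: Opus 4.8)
The plan is to pass from the four random points to the normal (Hesse) form of the two chords, to turn the event $\{\ell\le r\}$ into an explicit condition on these parameters, and then to integrate. Write each chord as $L_i=\set{(x,y)\st x\cos\psi_i+y\sin\psi_i=p_i}$ with $p_i\in[0,1]$ and $\psi_i\in[0,2\pi)$. If $X_i,X_i'$ lie at angles $\theta_i,\theta_i'$, then $\psi_i=\tfrac{\theta_i+\theta_i'}2$ and $p_i=\abs{\cos\tfrac{\theta_i-\theta_i'}2}$; by rotational invariance $\psi_i$ is uniform on $[0,2\pi)$ and independent of $p_i$, while a change of variables gives $p_i$ the density $\tfrac2{\pi\sqrt{1-p_i^2}}$ on $[0,1]$. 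All four quantities are independent, so $\beta\defined\psi_1-\psi_2$ is uniform on $[0,2\pi)$ and independent of $(p_1,p_2)$. Solving the two linear equations for $P$ (or, geometrically, observing that the feet $F_1,F_2$ of the two perpendiculars together with $O$ and $P$ lie on the circle of diameter $OP$, whence $\abs{F_1F_2}=\ell\abs{\sin\beta}$ and $\abs{F_1F_2}^2=p_1^2+p_2^2-2p_1p_2\cos\beta$) yields $\ell^2=\tfrac{p_1^2+p_2^2-2p_1p_2\cos\beta}{\sin^2\beta}$.

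Next I would integrate out $\beta$ with $p_1,p_2$ fixed. Clearing denominators, $\ell\le r$ is equivalent to $r^2x^2-2p_1p_2 x+(p_1^2+p_2^2-r^2)\le0$ in $x=\cos\beta$; the discriminant is $(r^2-p_1^2)(r^2-p_2^2)$, so admissible $\beta$ exist only when $p_1,p_2\le r$ (in line with $\ell\ge\max\set{p_1,p_2}$). Setting $\omega_i=\arcsin(p_i/r)\in[0,\pi/2]$, the roots are $x_+=\cos(\omega_1-\omega_2)$ and $x_-=-\cos(\omega_1+\omega_2)$, so $\set{\ell\le r}=\set{\cos\beta\in[x_-,x_+]}$, of uniform measure $\tfrac1{2\pi}\p[\big]{2\arccos x_- -2\arccos x_+}=1-\tfrac2\pi\max(\omega_1,\omega_2)$. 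The sign in $x_-$, hence the appearance of $\max$ rather than $\min$, is the step most easily gotten wrong. Thus $\prob{\ell\le r\given p_1,p_2}=1-\tfrac2\pi\max(\omega_1,\omega_2)$ when $p_1,p_2\le r$, and $0$ otherwise.

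I would then integrate out $p_1,p_2$. Using $\tfrac2{\pi\sqrt{1-p^2}}\,\mathrm{d}p=\tfrac2\pi\,\mathrm{d}(\arcsin p)$ and symmetrizing in the two chords (the region $p_2<p_1$ contributes $\max=\omega_1$), the double integral collapses to a single one: with $m^*=\min\set{1,r}$ it becomes $\prob{\ell\le r}=\tfrac4{\pi^2}\bigl[(\arcsin m^*)^2-\tfrac4\pi\int_0^{m^*}\tfrac{\arcsin(t/r)\arcsin t}{\sqrt{1-t^2}}\,\mathrm{d}t\bigr]$. Substituting $(\arcsin m^*)^2=2\int_0^{m^*}\tfrac{\arcsin t}{\sqrt{1-t^2}}\,\mathrm{d}t$ and $\tfrac\pi2-\arcsin(t/r)=\arccos(t/r)$ turns this into exactly $\tfrac{16}{\pi^3}\int_0^{m^*}\tfrac{\arccos(t/r)\arcsin t}{\sqrt{1-t^2}}\,\mathrm{d}t$, which is \eqref{eq:karamata-extended}.

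Finally, for $0\le r\le1$ one has $m^*=r$, and I must identify this integral with $\tfrac2{\pi^2}\Li_2(r^2)$. Both sides vanish at $r=0$, so it suffices to match $r$-derivatives. Since $\arccos(1)=0$ kills the boundary term, differentiating under the integral gives $\tfrac{\mathrm{d}}{\mathrm{d}r}\prob{\ell\le r}=\tfrac{16}{\pi^3 r}\int_0^r\tfrac{t\arcsin t}{\sqrt{r^2-t^2}\sqrt{1-t^2}}\,\mathrm{d}t$, while $\tfrac{\mathrm{d}}{\mathrm{d}r}\tfrac2{\pi^2}\Li_2(r^2)=-\tfrac{4\log(1-r^2)}{\pi^2 r}$. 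Hence everything reduces to the single-variable identity $\int_0^r\tfrac{t\arcsin t}{\sqrt{r^2-t^2}\sqrt{1-t^2}}\,\mathrm{d}t=-\tfrac\pi4\log(1-r^2)$, which is the genuine analytic source of the dilogarithm. I would prove it either by the substitution $t=r\sin u$ together with the expansion $\tfrac{\arcsin x}{\sqrt{1-x^2}}=\sum_{n\ge0}\tfrac{2^{2n}(n!)^2}{(2n+1)!}x^{2n+1}$ and termwise Wallis integration, or by writing $\arcsin t=\int_0^t(1-s^2)^{-1/2}\,\mathrm{d}s$, exchanging the order of integration, and evaluating the elementary inner integral to produce the logarithm. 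I expect this identity, rather than the geometric reduction, to be the main obstacle.
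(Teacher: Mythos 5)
Your argument is correct. For the distribution formula \eqref{eq:karamata-extended} it is essentially the paper's own analytic proof: the paper also passes to the normal form $(\ell_i,\theta_i)$, derives the density $\tfrac{2}{\pi\sqrt{1-t^2}}$ on $[0,1]$, reduces $\set{\ell\le r}$ to the same quadratic inequality in $\cos(\theta_1-\theta_2)$ with roots $\cos(\alpha_1\pm\alpha_2)$, and integrates; the differences are bookkeeping (the paper conditions on $\ell_1\le\ell_2$ and $\theta_1-\theta_2\in[0,\pi]$, so its conditional probability $\tfrac{2}{\pi}\arccos\p{\max(\ell_1,\ell_2)/r}$ is literally your $1-\tfrac2\pi\max(\omega_1,\omega_2)$, and your final symmetrization and $\arccos$-conversion step is built in from the start). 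Where you genuinely diverge is the dilogarithm identification. The paper integrates by parts with $u=\arccos(t/r)$, $v=\tfrac12(\arcsin t)^2$, reducing to $\tfrac12\int_0^1(\arcsin(rt))^2(1-t^2)^{-1/2}\dt$, and then expands $(\arcsin x)^2$ in its Maclaurin series and applies Wallis integrals; you instead match $r$-derivatives, reducing everything to
\begin{equation*}
    \int_0^r\frac{t\arcsin t}{\sqrt{r^2-t^2}\sqrt{1-t^2}}\dt=-\frac{\pi}{4}\log(1-r^2),
\end{equation*}
and this does close: after $t=r\sin u$, your series for $\arcsin x/\sqrt{1-x^2}$ together with Wallis gives $\tfrac{\pi}{8}\sum_{n\ge0}\tfrac{2}{n+1}r^{2n+2}=-\tfrac{\pi}{4}\log(1-r^2)$, since $\binom{2n+2}{n+1}\big/\big[(2n+1)\binom{2n}{n}\big]=\tfrac{2}{n+1}$. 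Two points to tighten. First, differentiating under the integral sign at the singular endpoint $t=r$ needs justification; the cleanest fix is to substitute $t=r\sin u$ \emph{before} differentiating, which makes the integrand smooth in $r\in[0,1)$, and to handle $r=0,1$ by continuity of both sides. Second, your alternative Fubini proof of the key identity is not as routine as advertised: the inner integral does evaluate to $\log\bigl[(\sqrt{1-s^2}+\sqrt{r^2-s^2})/\sqrt{1-r^2}\bigr]$, but the remaining outer integral against $(1-s^2)^{-1/2}$ is not elementary, so you should commit to the series route. In comparison, the paper's integration by parts avoids all regularity questions, while your derivative argument has the merit of isolating the one-variable identity that is the true source of the logarithm (equivalently, of the density \eqref{eq:def-rho}); both ultimately rest on a power series plus Wallis integrals.
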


We will provide two different proofs of \Cref{thm:karamata}, one analytical and one purely geometrical.
This is in contrast with Karamata's original proof by counting, which was conceptually more involved and made use of complicated geometric constructions.

The fact that \eqref{eq:karamata-extended} simplifies to $(2 /\pi^2) \Li_2(r^2)$ for $0 \leq r \leq 1$ is proven in \Cref{sec:identity}.
This gives an unexpected way of extending $\Li_2(r)$ past the point $r \geq 1$, namely, defining it as $(\pi^2/2) \prob{\ell \leq \sqrt{r}}$.
While there is a vast literature of extensions and modifications of the dilogarithmic function (see Zagier~\cite{Zagier2007-tc} for many examples) the extension we propose seems to be new and motivated by a natural geometric problem.
In \Cref{fig:dilogextend}, we compare it with another extension of $\Li_2$ to $\RR_{\geq 0}$ that appears in certain applications~\cite{Mitchell1949-jo}.

\begin{figure}[!ht]
\centering
\begin{tikzpicture}
\definecolor{col_a}{RGB}{230,97,1};
\definecolor{col_b}{RGB}{253,184,99};
\definecolor{col_c}{RGB}{178,171,210};
\definecolor{col_d}{RGB}{94,60,153};
\begin{axis}[
    set layers,
    axis line style={on layer=axis foreground, line width = 1pt},
    width=300pt,
    height=160pt,
    axis lines=left,
    xmin=0, xmax=6,
    ymin=0, ymax=6.8,
    xtick={0, 1},
    ytick={pi*pi/6, pi*pi/2},
    yticklabels={$\frac{\pi^2}{6}$,$\frac{\pi^2}{2}$},
    legend pos=north west,
    ymajorgrids=true,
    xmajorgrids=true,
    grid style=dashed,
    legend style={legend columns=-1, nodes={scale=1}},
]

\addplot[color = col_c, dashed, line width=1.5pt]
coordinates {
(1., 1.64493) (1.05,
  1.84048) (1.1, 1.962) (1.15, 2.0548) (1.2, 2.12917) (1.25,
  2.19018) (1.3, 2.24089) (1.35, 2.28336) (1.4, 2.31907) (1.45,
  2.34913) (1.5, 2.3744) (1.55, 2.39554) (1.6, 2.41313) (1.65,
  2.42761) (1.7, 2.43935) (1.75, 2.44869) (1.8, 2.45588) (1.85,
  2.46116) (1.9, 2.46472) (1.95, 2.46675) (2., 2.4674) (2.05,
  2.4668) (2.1, 2.46506) (2.15, 2.46229) (2.2, 2.45859) (2.25,
  2.45403) (2.3, 2.44869) (2.35, 2.44264) (2.4, 2.43594) (2.45,
  2.42864) (2.5, 2.42079) (2.55, 2.41244) (2.6, 2.40362) (2.65,
  2.39437) (2.7, 2.38473) (2.75, 2.37473) (2.8, 2.36439) (2.85,
  2.35374) (2.9, 2.34281) (2.95, 2.33162) (3., 2.32018) (3.05,
  2.30852) (3.1, 2.29665) (3.15, 2.28459) (3.2, 2.27235) (3.25,
  2.25996) (3.3, 2.24741) (3.35, 2.23472) (3.4, 2.22191) (3.45,
  2.20897) (3.5, 2.19593) (3.55, 2.1828) (3.6, 2.16957) (3.65,
  2.15626) (3.7, 2.14287) (3.75, 2.12942) (3.8, 2.1159) (3.85,
  2.10232) (3.9, 2.0887) (3.95, 2.07502) (4., 2.06131) (4.05,
  2.04756) (4.1, 2.03378) (4.15, 2.01997) (4.2, 2.00613) (4.25,
  1.99227) (4.3, 1.9784) (4.35, 1.96451) (4.4, 1.95061) (4.45,
  1.9367) (4.5, 1.92278) (4.55, 1.90886) (4.6, 1.89493) (4.65,
  1.88101) (4.7, 1.86709) (4.75, 1.85318) (4.8, 1.83927) (4.85,
  1.82536) (4.9, 1.81147) (4.95, 1.79759) (5., 1.78372) (5.05,
  1.76986) (5.1, 1.75602) (5.15, 1.7422) (5.2, 1.72839) (5.25,
  1.7146) (5.3, 1.70083) (5.35, 1.68708) (5.4, 1.67335) (5.45,
  1.65964) (5.5, 1.64596) (5.55, 1.6323) (5.6, 1.61866) (5.65,
  1.60504) (5.7, 1.59146) (5.75, 1.57789) (5.8, 1.56436) (5.85,
  1.55085) (5.9, 1.53737) (5.95, 1.52391) (6., 1.51049)
};
\addlegendentry{$\int_0^r \frac{-\log |1 - t| }{t} \mathop{}\!\mathrm{d} t$}

\addplot[color = col_a, line width=1pt]
coordinates{
(0., 0.)(0.05, 0.0506393)(0.1, 0.102618)(0.15, 0.156035)(0.2,
   0.211004)(0.25, 0.267653)(0.3, 0.32613)(0.35,
  0.386606)(0.4, 0.449283)(0.45, 0.514399)(0.5,
  0.582241)(0.55, 0.653158)(0.6, 0.727586)(0.65,
  0.806083)(0.7, 0.889378)(0.75, 0.978469)(0.8, 1.07479)(0.85,
   1.18058)(0.9, 1.29971)(0.95, 1.44063)(1., 1.64493)(1.05, 1.84967) (1.1, 1.98717) (1.15,
  2.09967) (1.2, 2.19626) (1.25, 2.28133) (1.3, 2.35748) (1.35,
  2.42645) (1.4, 2.48946) (1.45, 2.54743) (1.5, 2.60109) (1.55,
  2.65098) (1.6, 2.69757) (1.65, 2.74124) (1.7, 2.78229) (1.75,
  2.82101) (1.8, 2.8576) (1.85, 2.89228) (1.9, 2.9252) (1.95,
  2.95652) (2., 2.98637) (2.05, 3.01487) (2.1, 3.04212) (2.15,
  3.0682) (2.2, 3.09321) (2.25, 3.11721) (2.3, 3.14028) (2.35,
  3.16247) (2.4, 3.18384) (2.45, 3.20445) (2.5, 3.22432) (2.55,
  3.24352) (2.6, 3.26208) (2.65, 3.28002) (2.7, 3.2974) (2.75,
  3.31423) (2.8, 3.33054) (2.85, 3.34636) (2.9, 3.36172) (2.95,
  3.37664) (3., 3.39114) (3.05, 3.40523) (3.1, 3.41894) (3.15,
  3.43229) (3.2, 3.44528) (3.25, 3.45795) (3.3, 3.47029) (3.35,
  3.48232) (3.4, 3.49406) (3.45, 3.50552) (3.5, 3.5167) (3.55,
  3.52763) (3.6, 3.5383) (3.65, 3.54874) (3.7, 3.55894) (3.75,
  3.56892) (3.8, 3.57868) (3.85, 3.58824) (3.9, 3.59759) (3.95,
  3.60675) (4., 3.61573) (4.05, 3.62452) (4.1, 3.63314) (4.15,
  3.64159) (4.2, 3.64988) (4.25, 3.65801) (4.3, 3.66599) (4.35,
  3.67382) (4.4, 3.6815) (4.45, 3.68904) (4.5, 3.69645) (4.55,
  3.70373) (4.6, 3.71088) (4.65, 3.71791) (4.7, 3.72482) (4.75,
  3.73161) (4.8, 3.73829) (4.85, 3.74485) (4.9, 3.75131) (4.95,
  3.75767) (5., 3.76392) (5.05, 3.77008) (5.1, 3.77613) (5.15,
  3.7821) (5.2, 3.78797) (5.25, 3.79375) (5.3, 3.79945) (5.35,
  3.80506) (5.4, 3.81059) (5.45, 3.81604) (5.5, 3.82141) (5.55,
  3.82671) (5.6, 3.83192) (5.65, 3.83707) (5.7, 3.84214) (5.75,
  3.84715) (5.8, 3.85208) (5.85, 3.85695) (5.9, 3.86175) (5.95,
  3.86649) (6., 3.87117)
};
\addlegendentry{$\frac{\pi^2}{2} \mathbb{P}\big( \ell \leq \sqrt{r} \big)$}

\addplot[color = black, line width=1pt]
coordinates {
(0., 0.)(0.05, 0.0506393)(0.1, 0.102618)(0.15, 0.156035)(0.2,
   0.211004)(0.25, 0.267653)(0.3, 0.32613)(0.35,
  0.386606)(0.4, 0.449283)(0.45, 0.514399)(0.5,
  0.582241)(0.55, 0.653158)(0.6, 0.727586)(0.65,
  0.806083)(0.7, 0.889378)(0.75, 0.978469)(0.8, 1.07479)(0.85,
   1.18058)(0.9, 1.29971)(0.95, 1.44063)(1., 1.64493)
};
\addlegendentry{$\Li_2(r)$}
\end{axis}
\end{tikzpicture}
\caption{Two possible extensions of the dilogarithm function to $\RR_{\geq 0}$.}
\label{fig:dilogextend}
\end{figure}
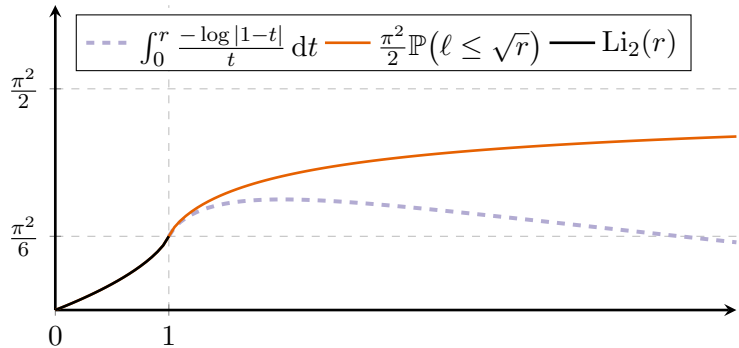

Computing the derivative $\rho(r)$ of $\prob[\big]{\ell \leq r}$ in $r$, we obtain
\begin{align}
\label{eq:def-rho}
    \rho(r) \defined \frac{\d}{\dr} \prob[\big]{\ell \leq r}
    = \begin{cases}
        \displaystyle - \frac{4 \log(1 - r^2)}{\pi^2 r} & \text{if $0 \leq r < 1$}, \\[0.8em]
        \displaystyle \frac{16}{r^2 \pi^3} \int_0^1 \frac{t \arcsin t}{\sqrt{1-t^2} \sqrt{1-t^2/r^2}} \dt & \text{if $r > 1$}.
    \end{cases}
\end{align}
We can see in \Cref{fig:diagonals123} that again the derivative explodes at $1$, showing that the function $r \mapsto \prob[\big]{\ell \leq r}$ is not analytic at $r = 1$.
The same holds for any extension of $\Li_2(r)$.

\begin{figure}[!ht]
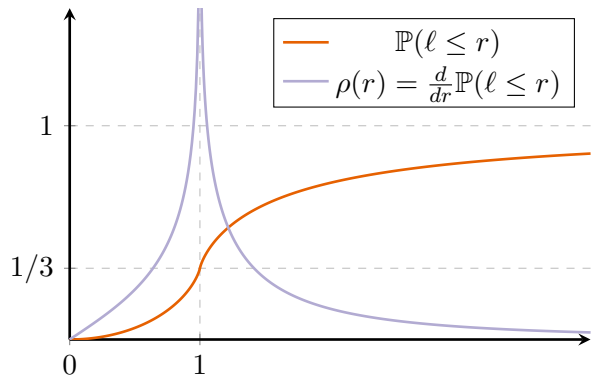

\centering

\caption{Probability that intersection point is inside disc of radius $r$, together with the density $\rho(r)$.}
\label{fig:diagonals123}
\end{figure}

\subsection{A conceptual overview}

We now explore more closely the link between the discrete and continuous models discussed above.
Denote by $\Graff_1(\RR^2)$ the set of all affine subspaces of $\RR^2$ of dimension $1$, that is, the set of all lines.
Define $\Delta \defined \set[\big]{(x,x) \st x \in \RR^2}$ and for $(x, y) \in \RR^2 \setminus \Delta$, denote by $\aff(x,y) \in \Graff_1(\RR^2)$ the \emph{affine span} of $x$ and $y$, that is, the unique line through $x$ and $y$.
Here $\Graff$ stands for \emph{affine Grassmannian}.
We equip $\Graff_1(\RR^2)$ with a topology such that the map $(x,y) \mapsto \aff(x,y)$ from  $\RR^2 \times \RR^2 \setminus \Delta$ to  $\Graff_1(\RR^2)$ is continuous.
Now take $X_1$, $X_2$, $X_3$ and $X_4$ independently distributed as $\Unif(\SS^1)$, that is, uniformly drawn from the unit circle $\SS^1 \subseteq \RR^2$.
The elements $\aff(X_1,X_2)$ and $\aff(X_3,X_4)$ are independent random lines in $\Graff_1(\RR^2)$.
In other words, each of these lines are samples from a naturally defined probability measure in $\Graff_1(\RR^2)$.
Sampling two lines independently from this measure gives parallel lines with probability zero, so they intersect in a point $\aff(X_1,X_2) \cap \aff(X_3,X_4) \in \RR^2$, inducing a probability measure in $\RR^2$.

If $\Pi_n$ is a set of $n$ equally spaced points in $\SS^1$, then we have that $\Unif(\Pi_n)$ converges in distribution to $\Unif(\SS^1)$ as $n \to \infty$.
Note that the map $\Phi \from (\RR^2)^4 \mapsto \RR^2$ that sends $(x_1,x_2,x_3,x_4) \mapsto \aff(x_1,x_2) \cap \aff(x_3,x_4)$ is continuous, except on a set of measure zero.
Therefore, as an application of the Continuous Mapping Theorem, if $x_1,x_2,x_3,x_4$ are independently distributed as $\Unif(\Pi_n)$ and $X_1,X_2,X_3,X_4$ are independently distributed as $\Unif(\SS^1)$, then $\aff(x_1,x_2) \cap \aff(x_3,x_4)$ converges in distribution to $\aff(X_1,X_2) \cap \aff(X_3,X_4)$.
\begin{equation*}
\begin{tikzcd}
{x_1, x_2, x_3, x_4 \sim \Unif(\Pi_n)} \arrow[dd, "\Phi"] \arrow[rr, "n \to \infty"] &  & {X_1,X_2,X_3,X_4 \sim \Unif(\SS^1)} \arrow[dd, "\Phi"] \\
&  &\\
{\aff(x_1,x_2)\cap\aff(x_3,x_4)} \arrow[rr, "n \to \infty"] &  & {\aff(X_1,X_2)\cap\aff(X_3,X_4)}
\end{tikzcd}
\end{equation*}
In particular, \Cref{thm:karamata} implies the original result of Karamata.

\subsection{Special values}

From \Cref{thm:karamata}, we can recover an expression for the distribution of $P = \aff(X_1,X_2) \cap \aff(X_3,X_4)$ using the rotational symmetry of $P$.
Indeed, from the density function $\rho(r)$ defined in \eqref{eq:def-rho}, we can infer that for every measurable set $S \subseteq \RR^2$, if we express the indicator function of $S$ in polar coordinates $\ind_S(r,\theta)$, then
\begin{equation}
\label{eq:explicit}
    \prob[\big]{\aff(X_1,X_2)\cap\aff(X_3,X_4) \in S} = \int_0^{2\pi} \int_0^\infty  \ind_S(r,\theta) \rho(r) \dr \dtheta.
\end{equation}
Unfortunately, this is a quite complicated integral.
It is unclear whether explicit values can be computed from this formula.
Even when $S$ is a disk, where we have \eqref{eq:karamata-extended}, not many explicit values for this probability function are known.
However, from the fact that we know a few special values of the dilogarithm function $\Li_2(x)$ for $0 \leq x \leq 1$, some surprising identities can be obtained.
For instance, from the fact that
\begin{equation*}
    \Li_2\p[\big]{1/2} = \frac{\pi^2}{12} - \frac{1}{2} (\log 2)^2,
\end{equation*}
see Zagier~\cite{Zagier2007-tc} for instance, we can obtain that
\begin{equation*}
    \prob[\big]{\ell \leq 1/\sqrt{2}} = \frac{2}{\pi^2} \Li_2(1/2) = \frac{1}{6} - \p[\Big]{\frac{\log 2}{\pi}}^2.
\end{equation*}
Similar identities can be obtained from other special values of $\Li_2$, such as
\begin{align*}
    \Li_2\p[\Big]{\frac{3 - \sqrt{5}}{2}} &= \frac{\pi^2}{15} - \p[\bigg]{\log\p[\Big]{\frac{1 + \sqrt{5}}{2}} }^2, \\
    \Li_2\p[\Big]{\frac{-1 + \sqrt{5}}{2}} &= \frac{\pi^2}{10} - \p[\bigg]{\log\p[\Big]{\frac{1 + \sqrt{5}}{2}}}^2.
\end{align*}
We do not know any special value for $\prob[\big]{\ell \leq r}$ when $r > 1$.

\subsection{The Bertrand paradox}

Being a foundational concern in geometric probability, the Bertrand paradox has led to a sizable literature in mathematics, physics and philosophy (see for instance \cite{Bower1934-oq, Jaynes1973-bc, Szekely1986-wk, Marinoff1994-kr, Gyenis2015-ty}).
We also refer to the survey of Calka~\cite{Calka2019-ua} on the classical problems of geometric probability, including the Bertrand paradox.

The problem that Bertrand~\cite{Bertrand1889-wl} proposed reads as follows.
Consider a \emph{random chord} of a given unit circle, what is the probability that this chord is longer then the edge of an equilateral triangle inscribed in the circle?
In other words, what is the probability that the length of the chord is longer that $\sqrt{3}$?
The so-called paradox comes from the fact that the term \emph{random chord} can be reasonably interpreted in many different ways.
Usually, three different \emph{solutions} are proposed: a chord obtained by choosing the midpoint uniformly at random in the unit disk; a chord obtained by choosing the distance to the midpoint uniformly in $[0,1]$ together with a uniform rotation; and a chord obtained by choosing the endpoints uniformly in the unit circle.
The probabilities are then easily computable as $1/4$, $1/2$ and $1/3$ respectively, which proves that these three random chords came from three different probability measures.

Since the last proposed random chord coincides exactly with the random lines we have been considering for Karamata's problem, it is natural to wonder which distributions arises from the intersection of random lines drawn according to the other two distributions.
Following our analytic proof of \Cref{thm:karamata}, explicit expressions for $\prob[\big]{\ell \leq r}$ can be easily computed.
For $0 \leq r \leq 1$, we respectively obtain
\begin{equation*}
    \frac{r^2}{2}, \;\; \frac{3r^4}{8}, \; \text{and} \;\; \frac{2}{\pi^2} \Li_2(r^2).
\end{equation*}
While we have simple expressions when $0 \leq r \leq 1$, the general forms of $\prob[\big]{\ell \leq r}$ are more complicated and we postpone them to \Cref{cor:bertrand} in \Cref{sec:bertrand}.

Starting from any probability measure $\mu$ on $\RR_{\geq 0}$, we can produce an associated probability measure $\cG(\mu)$ on $\Graff_1(\RR^2)$ by taking the line whose closest point to the origin\footnote{When $t = 0$, we take a line through the origin by choosing a direction uniformly in the circle.} is $(t \cos\theta, t \sin\theta)$, where $\theta$ is independently and uniformly sampled from $[0,2\pi)$.
Indeed, any rotationally invariant probability measure on $\Graff_1(\RR^2)$ can be written in this way.
Our general result is the following.

\begin{theorem}
\label{thm:analytic}
Let $\mu$ be an atomless probability measure on $\RR_{\geq 0}$ and sample two independent lines $L_1$ and $L_2$ in the plane from $\cG(\mu)$.
If $\ell$ is the distance from $L_1 \cap L_2$ to the origin, then we have
\begin{equation}
\label{eq:atomless-transform}
    \prob[\big]{\ell \leq r} = \frac{4}{\pi} \int_{0}^{r} \mu\p[\big]{[0,t]}\arccos(t/r) \d\mu(t).
\end{equation}
\end{theorem}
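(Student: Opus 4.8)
The plan is to parametrize each random line by its foot of perpendicular and reduce everything to a one-dimensional computation in the angle between the two lines. Writing $L_i$ in Hesse normal form $x\cos\theta_i + y\sin\theta_i = t_i$, under $\cG(\mu)$ we have $t_i \sim \mu$ and $\theta_i \sim \Unif[0,2\pi)$, all four independent. Solving the two linear equations for the intersection point $P$ by Cramer's rule (equivalently, using that the origin $O$, the two feet of perpendicular, and $P$ lie on a common circle of diameter $\abs{OP}$) yields the clean identity
\begin{equation*}
    \ell^2 = \frac{t_1^2 + t_2^2 - 2 t_1 t_2 \cos\phi}{\sin^2\phi}, \qquad \phi \defined \theta_2 - \theta_1.
\end{equation*}
Since $\phi$ is itself uniform on $[0,2\pi)$ and $\ell$ depends on $(\theta_1,\theta_2)$ only through $\cos\phi$, I would condition on $(t_1,t_2)$ and treat $\phi$ as the single remaining source of randomness.

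The next step turns the event $\set{\ell \le r}$ into an explicit range for $\cos\phi$. First I record the geometric fact that every point of $L_i$ lies at distance at least $t_i$ from the origin, so $\ell \ge \max\set{t_1,t_2}$; hence only the event $\set{t_1 \le r,\, t_2 \le r}$ contributes, which is precisely why the final integral is supported on $[0,r]$. On that event, $\ell \le r$ rearranges to the quadratic inequality $r^2 c^2 - 2 t_1 t_2 c + (t_1^2 + t_2^2 - r^2) \le 0$ in $c = \cos\phi$, whose discriminant factors as $(r^2 - t_1^2)(r^2 - t_2^2) \ge 0$. The decisive simplification is the substitution $t_i = r\cos\beta_i$ with $\beta_i \defined \arccos(t_i/r) \in [0,\pi/2]$: the two roots collapse into $c_\pm = \cos(\beta_1 \mp \beta_2)$, so the inequality becomes $\cos(\beta_1+\beta_2) \le \cos\phi \le \cos(\beta_1 - \beta_2)$.

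Reducing modulo the symmetry $\cos\phi = \cos(2\pi-\phi)$, the representative $\psi \in [0,\pi]$ with $\cos\psi = \cos\phi$ carries density $1/\pi$, and since cosine is decreasing there the condition reads $\abs{\beta_1 - \beta_2} \le \psi \le \beta_1 + \beta_2$ (using $\beta_1+\beta_2 \le \pi$). Its probability is therefore
\begin{equation*}
    \prob[\big]{\ell \le r \given t_1, t_2} = \frac{(\beta_1 + \beta_2) - \abs{\beta_1 - \beta_2}}{\pi} = \frac{2}{\pi}\min\set{\beta_1,\beta_2} = \frac{2}{\pi}\arccos\p[\Big]{\frac{\max\set{t_1,t_2}}{r}},
\end{equation*}
valid on $\set{t_1,t_2 \le r}$ and zero otherwise. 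Finally I integrate out $t_1,t_2$: as $\mu$ is atomless the maximum is attained by a unique coordinate almost surely, so the order-statistic identity $\Expec[g(\max)\ind_{\max \le r}] = 2\int_0^r g(t)\,\mu([0,t])\,\d\mu(t)$ with $g(t) = \arccos(t/r)$ produces exactly \eqref{eq:atomless-transform}.

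I expect the main obstacle to be computational rather than conceptual, concentrated in the second step: recognizing that the discriminant factors as $(r^2-t_1^2)(r^2-t_2^2)$ and that the substitution $t_i = r\cos\beta_i$ turns the two roots into $\cos(\beta_1 \mp \beta_2)$. Everything downstream is short trigonometric bookkeeping together with the atomlessness hypothesis, which is used exactly once, to discard the diagonal $\set{t_1 = t_2}$ and justify the order-statistic formula.
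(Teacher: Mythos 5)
Your proof is correct and follows essentially the same route as the paper's: the same parametrization of lines by foot of perpendicular, the same formula $\ell^2 = (t_1^2+t_2^2-2t_1t_2\cos\phi)/\sin^2\phi$, the same quadratic inequality in $\cos\phi$ with discriminant $(r^2-t_1^2)(r^2-t_2^2)$, the same substitution $t_i = r\cos\beta_i$, and the same angular computation giving conditional probability $\tfrac{2}{\pi}\arccos\p[\big]{\max\set{t_1,t_2}/r}$. The only cosmetic difference is that you symmetrize at the end via the order-statistic identity for $\max\set{t_1,t_2}$, whereas the paper conditions on $\ell_1 \leq \ell_2$ and $\theta_1-\theta_2 \in [0,\pi]$ up front; both arguments invoke atomlessness in exactly the same way, to discard the diagonal $\set{t_1 = t_2}$.
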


From the proof of \Cref{thm:analytic}, we can also compute the expected value of the distance $\ell$ of the intersection point to the origin.

\begin{corollary}
\label{cor:integrability}
Sample two independent lines $L_1$ and $L_2$ from $\cG(\mu)$, where $\mu$ is an arbitrary probability measure on $\RR_{\geq 0}$, and write $\ell$ for the distance of $L_1 \cap L_2$ to the origin.
If $\mu(\set{0}) < 1$, then $\Expec \ell = \infty$.
\end{corollary}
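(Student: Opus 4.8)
The plan is to reduce the computation of $\Expec \ell$ to a one-dimensional singularity analysis. I would start from the representation of $\ell$ that underlies the proof of \Cref{thm:analytic}: writing each sampled line as $\set[\big]{(x,y) \st x \cos\theta_i + y \sin\theta_i = t_i}$, the distances $t_1, t_2$ are independent with law $\mu$, the angles $\theta_1, \theta_2$ are independent and uniform on $[0, 2\pi)$, and the two families are independent of each other. Solving the resulting linear system for the intersection point and simplifying yields
\begin{equation*}
    \ell = \frac{g(\phi)}{\abs{\sin\phi}}, \qquad g(\phi) \defined \sqrt{t_1^2 + t_2^2 - 2 t_1 t_2 \cos\phi}, \qquad \phi \defined \theta_1 - \theta_2,
\end{equation*}
where $\phi$ is uniform on $[0, 2\pi)$ and independent of $(t_1, t_2)$. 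Since \Cref{cor:integrability} permits $\mu$ to carry atoms, I would work directly with this formula rather than with \eqref{eq:atomless-transform}.

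Because the integrand is nonnegative, I would apply Tonelli's theorem to write
\begin{equation*}
    \Expec \ell = \frac{1}{2\pi} \Expec_{t_1, t_2} \int_0^{2\pi} \frac{g(\phi)}{\abs{\sin\phi}} \d\phi,
\end{equation*}
after which it suffices to exhibit a set of pairs $(t_1, t_2)$ of positive $\mu \otimes \mu$-measure on which the inner integral is already $+\infty$. The natural place to look is where the two lines are nearly parallel, namely the endpoints $\phi = 0$ and $\phi = \pi$ at which $\abs{\sin\phi}$ vanishes.

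The robust source of divergence is $\phi = \pi$, corresponding to near-antiparallel lines. There $g(\pi) = \sqrt{t_1^2 + t_2^2 + 2 t_1 t_2} = t_1 + t_2$, so for any pair with $s \defined t_1 + t_2 > 0$ continuity gives an $\eta > 0$ with $g(\phi) \ge s/2$ on $\abs{\phi - \pi} \le \eta$, while $\abs{\sin\phi} = \abs{\sin(\phi - \pi)} \le \abs{\phi - \pi}$ there. Hence
\begin{equation*}
    \int_0^{2\pi} \frac{g(\phi)}{\abs{\sin\phi}} \d\phi \ge \frac{s}{2} \int_{\pi - \eta}^{\pi + \eta} \frac{\d\phi}{\abs{\phi - \pi}} = \infty.
\end{equation*}
The hypothesis $\mu(\set{0}) < 1$ enters exactly here: it gives $\prob{t_1 = t_2 = 0} = \mu(\set{0})^2 < 1$, so with positive probability $t_1 + t_2 > 0$, and on that event the inner integral diverges; thus $\Expec \ell = \infty$.

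I expect no serious obstacle, since the argument is a short singularity estimate once the formula for $\ell$ is available. The only points needing care are the justification of the Tonelli interchange, which is immediate from nonnegativity, and the choice of the $\phi = \pi$ endpoint over $\phi = 0$: at the latter $g(0) = \abs{t_1 - t_2}$ can vanish, so that endpoint alone would fail to force divergence when $\mu$ is concentrated near a single positive value, whereas $g(\pi) = t_1 + t_2$ is positive for $\mu \otimes \mu$-almost every pair precisely when $\mu(\set{0}) < 1$.
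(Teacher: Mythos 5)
Your proof is correct, and it takes the same basic route as the paper: start from the distance formula \eqref{eq:distance}, namely $\ell = \sqrt{\ell_1^2+\ell_2^2-2\ell_1\ell_2\cos(\theta_2-\theta_1)}\,/\,\abs[\big]{\sin(\theta_2-\theta_1)}$, and extract $\Expec \ell = \infty$ from the non-integrable singularity of $1/\abs{\sin\phi}$ at the angles where the two lines become parallel. However, your version is in fact the \emph{correct} execution of this idea, whereas the paper's write-up contains a flaw that your localization at $\phi = \pi$ repairs. The paper conditions on $\ell_1, \ell_2 \geq a$ and asserts the pointwise bound $\sqrt{\ell_1^2+\ell_2^2-2\ell_1\ell_2\cos(\theta_2-\theta_1)} \geq \ell_1+\ell_2 \geq 2a$ for all angles; this is false, since the left-hand side is at most $\ell_1+\ell_2$ (with equality only at $\theta_2-\theta_1 = \pi$) and can be as small as $\abs{\ell_1-\ell_2}$. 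Worse, the divergence the paper then exhibits, $\int_0^{2\pi}\d\gamma/\gamma = \infty$, comes from the endpoint $\gamma \to 0^{+}$, which is exactly where the numerator can vanish: if $\mu$ has an atom at some $c>0$, then with positive probability $\ell_1=\ell_2=c$, and near $\phi = 0$ the ratio $g(\phi)/\abs{\sin\phi}$ stays bounded, so that singularity alone cannot force divergence. Your closing remark about rejecting the $\phi=0$ endpoint makes precisely this point, and your argument — $g(\pi) = t_1+t_2 > 0$ with probability $1-\mu(\set{0})^2 > 0$, continuity giving $g \geq s/2$ on a neighborhood of $\pi$, the bound $\abs{\sin\phi} \leq \abs{\phi-\pi}$, and Tonelli to pass the pointwise divergence to the expectation — is airtight for arbitrary $\mu$ with $\mu(\set{0}) < 1$, atoms included.
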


While this may appear unexpected at first sight, it has a simple intuitive explanation: for rotationally invariant measures, it is not very unlikely to obtain almost parallel lines, whose intersection points can be made arbitrarily far off quite quickly.
Indeed, our proof of \Cref{cor:integrability} makes this intuition rigorous.

In \Cref{sec:bertrand}, we explicitly work out one more variant of \Cref{thm:karamata} we found of interest.
The lines $L_1$ and $L_2$ are such that their closest point to the origin follows a two-dimensional standard Gaussian distribution.
Equivalently, we considered $\cG(\mu)$ where $\mu$ is a Rayleigh distribution.
An explicit formula is also computable, this time in terms of modified Bessel functions of the first kind.


\section{Proofs of the main theorem}
\label{sec:main-proof}

Recall that  $X_1$, $X_1'$, $X_2$, $X_2'$ are independent random variables chosen uniformly from the unit circle $\SS^1 \subseteq \RR^2$ centered at the origin, $L_1 = \aff(X_1,X_1')$ and $L_2 = \aff(X_2, X_2')$ are the lines spanned by these variables and, with full probability, $P \in \RR^2$ is the intersection point of $L_1$ and $L_2$.

\begin{figure}[!ht]
\centering
\begin{tikzpicture}
\tkzDefPoints{0/0/O, 1.75/0/R, 2/0/R'}
\tkzDefPointOnCircle[through = center O angle 20 point R]
\tkzGetPoint{X}
\tkzDefPointOnCircle[through = center O angle 125 point R]
\tkzGetPoint{X'}
\tkzDefMidPoint(X,X')
\tkzGetPoint{A}
\tkzDrawCircle[line width=0.5pt](O,R)
\tkzDrawLine[line width=0.5pt,add = 0.5 and 0.5](X,X')
\tkzDrawSegment[line width=0.8pt,dashed,gray](O,A)
\tkzDrawSegment[line width=0.8pt,dashed,gray](O,R')
\tkzMarkRightAngle[size=0.2](O,A,X)
\tkzDrawPoints[size=3, fill=black](X,X',A)
\tkzMarkAngle[size=0.25](R,O,A)
\tkzLabelAngle[above right, pos=0.2, xshift=-0.25em, yshift=-0.2em](R,O,A){$\theta_i$}
\tkzLabelPoint[above right](X){$X_i$}
\tkzLabelPoint[above](A){$A_i$}
\tkzLabelPoint[above, xshift=-0.15em](X'){$X_i'$}
\tkzLabelLine[pos=-0.5,above](X,X'){$L_i$}
\draw [gray,decorate,decoration={brace,amplitude=5,raise=2},xshift=0,yshift=0]
(O) -- (A) node [black,midway,xshift=-1.2em,yshift=0.5em] {$\ell_i$};
\end{tikzpicture}
\caption{Line $L_i$ through points $X_i$ and $X_i'$ in the unit circle.}
\label{fig:line-definition}
\end{figure}
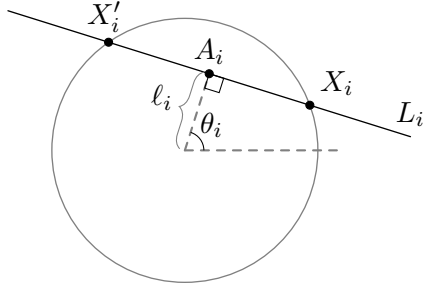

For $i = 1,2$, note that each $L_i$ can be uniquely determined by its distance $\ell_i \in [0,1]$ to the origin, and the angle $\theta_i \in [0,2\pi)$ such that the closest point in $L_i$ to the origin is $A_i \defined (\ell_i \cos \theta_i, \ell_i \sin \theta_i)$ as in \Cref{fig:line-definition}.

We now determine the law of $L_i$ in terms of its coordinates $(\ell_i, \theta_i)$.
To do so, we determine $\prob{\ell_i \leq r}$ for $0 < r < 1$.
Consider the circle $\cC_r$ of radius $r$ centered at the origin and trace the two lines through $X_i$ that are tangent to $\cC_r$.
They intersect the unit circle in points $S$ and $T$, see \Cref{fig:line-dist}.

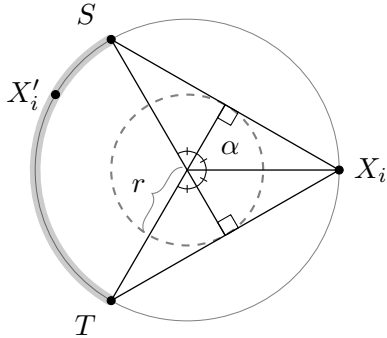
\begin{figure}[!ht]
\centering
\begin{tikzpicture}
\tkzDefPoints{0/0/O, 2/0/X, 1/0/R}
\tkzDefLine[tangent from = X](O,R)
\tkzGetPoints{A2}{A1}
\tkzInterLC(X,A1)(O,X)
\tkzGetPoints{G1}{F1}
\tkzInterLC(X,A2)(O,X)
\tkzGetPoints{F2}{G2}
\tkzDefPointOnCircle[through = center O angle 150 point X]
\tkzGetPoint{X1}
\tkzInterLC(O,F2)(O,R)
\tkzGetPoints{R1}{R2}
\tkzDrawCircle(O,X)
\tkzDrawCircle[line width=0.8pt,dashed,gray](O,R)
\tkzDrawSegments[line width=0.5pt](O,X O,F1 O,F2 O,A1 O,A2 X,F1 X,F2)
\tkzMarkAngle[size=0.25, mark=|, mksize=1.5](A1,O,F1)
\tkzMarkAngle[size=0.25, mark=|, mksize=1.5](F2,O,A2)
\tkzMarkAngle[size=0.25, mark=|, mksize=1.5](X,O,A1)
\tkzMarkAngle[size=0.25, mark=|, mksize=1.5](A2,O,X)
\tkzMarkRightAngle[size=0.2](O,A1,X)
\tkzMarkRightAngle[size=0.2](O,A2,X)
\tkzDrawPoints[size=3, fill = black](X,F1,F2,X1)
\tkzDrawArc[delta=0,line width=4,opacity=0.2](O,F1)(F2)
\tkzLabelAngle[pos=0.32,xshift=0.75em,yshift=0.35em](X,O,A1){$\alpha$}
\tkzLabelPoint[right](X){$X_i$}
\tkzLabelPoint[above left](F1){$S$}
\tkzLabelPoint[below left](F2){$T$}
\tkzLabelPoint[left](X1){$X_i'$}
\draw [gray,decorate,decoration={brace,mirror,amplitude=5,raise=2},xshift=0,yshift=0]
(O) -- (R2) node [black,midway,xshift=-1.0em,yshift=0.6em] {$r$};
\end{tikzpicture}
\caption{The distance from the origin to the line $X_i X_i'$ is less than $r$ when $X_i'$ lie in the arc $ST$.}
\label{fig:line-dist}
\end{figure}

We only have $\ell_i \leq r$ if $X_i'$ lies in the arc $ST$, which covers a $(2\pi - 4\alpha)/2\pi$ proportion of the unit circle, where $\alpha = \arccos r$.
Therefore,
\begin{equation}
\label{eq:cumulative}
    \prob[\big]{\ell_i \leq r} = 1 - \frac{2}{\pi}\arccos r  = \frac{2}{\pi} \arcsin r = \frac{2}{\pi} \int_0^r \frac{1}{\sqrt{1 - \ell_i^2}} \d\ell_i.
\end{equation}
Since the expression above does not depend on $\theta_i$, we have that $\ell_i$ and $\theta_i$ are independent.
We write the intersection $P = L_1 \cap L_2$ in polar coordinates $P = (\ell \cos\theta, \ell \sin \theta)$, and since $\theta_1$ and $\theta_2$ are uniform and independent, so is $\theta$ uniform and independent from $\ell$.

\subsection{Geometric proof}
We can assume that $\ell_1 \leq \ell_2$, since we have
\begin{equation*}
    \prob[\big]{\ell \leq r} = \prob[\big]{\ell \leq r \given \ell_1 \leq \ell_2} + \prob[\big]{\ell \leq r \given \ell_2 \leq \ell_1} = 2 \prob[\big]{\ell \leq r \given \ell_1 \leq \ell_2}
\end{equation*}
from the fact that $\prob[\big]{\ell_1 = \ell_2} = 0$.
Moreover, if $\ell \leq r$ and $\ell_1 \leq \ell_2$, then we must have that $\ell_1 \leq \min\set{1,r}$, as $\ell \geq \ell_1,\ell_2$.
In other words, we can write
\begin{align*}
    \prob[\big]{\ell \leq r}
    &= 2 \prob[\big]{\ell \leq r \given \ell_1 \leq \ell_2} \\
    &= \frac{8}{\pi^2} \int_{0}^{\min\set{1,r}} \int_{0}^{t_2} \frac{\prob[\big]{\ell \leq r \given \ell_1 = t_1, \ell_2 = t_2}}{\sqrt{1 - t_1^2} \sqrt{1 - t_2^2}} \dt_1 \dt_2.
\end{align*}
To carry our the computation, we need to determine $\prob[\big]{\ell \leq r \given \ell_1 = t_1, \ell_2 = t_2}$, which should be formally interpreted as a conditional density function.

Let $\cC_{t_2}$ be the circle of radius $t_2$ centered at the origin $O$.
Let the points $F$ and $G$ be the intersection of the line $L_1$ with $\cC_r$, as in \Cref{fig:intersec-dist}.
Let $F'$ and $F''$ be the points of tangency of the lines through $F$ that are tangent to the circle $\cC_{t_2}$.
Similarly, $G'$ and $G''$ are the tangency points of the lines through $G$ that are tangent to the circle $\cC_{t_2}$.

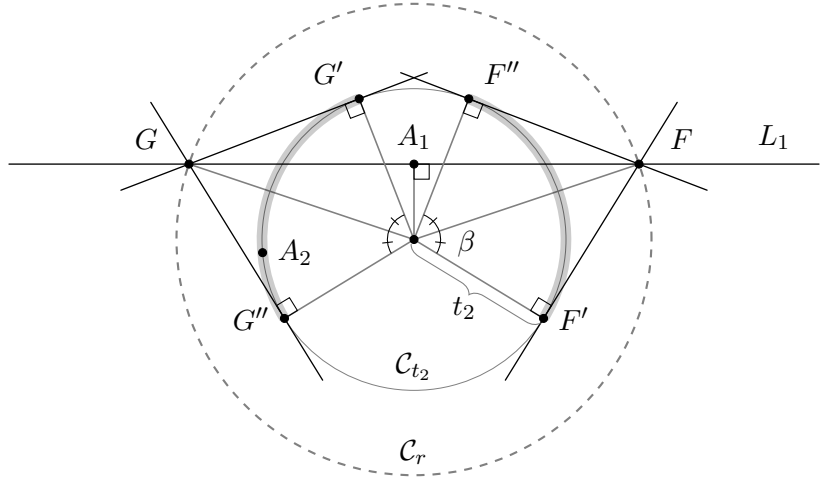
\begin{figure}[!h]
\centering
\begin{tikzpicture}
\pgfmathsetmacro{\sscale}{1.25}
\tkzDefPoints{0/0/O, 0/0.8*\sscale/A1, 2.5*\sscale/0/R, 0/1.6*\sscale/Y}
\tkzDefPoint(185:1.6*\sscale){A2}
\tkzDefLine[tangent at = A1](O)
\tkzGetPoint{H1}
\tkzInterLC(A1,H1)(O,R)
\tkzGetPoints{F1}{G1}
\tkzDefLine[tangent from = F1](O,A2)
\tkzGetPoints{F2}{F3}
\tkzDefLine[tangent from = G1](O,A2)
\tkzGetPoints{G2}{G3}
\tkzDrawCircle(O,A2)
\tkzDrawCircle[line width=0.8pt,dashed,gray](O,R)
\tkzDrawLine[line width=0.5pt,add= 0.4 and 0.4](F1,G1)
\tkzDrawLine[line width=0.5pt,add= 0.4 and 0.4](F1,F2)
\tkzDrawLine[line width=0.5pt,add= 0.4 and 0.4](F1,F3)
\tkzDrawLine[line width=0.5pt,add= 0.4 and 0.4](G1,G2)
\tkzDrawLine[line width=0.5pt,add= 0.4 and 0.4](G1,G3)
\tkzDrawSegments[line width=0.6pt,gray](O,A1 O,F1 O,F2 O,F3 O,G1 O,G2 O,G3)
\tkzMarkRightAngle[size=0.2](O,A1,F1)
\tkzMarkAngle[size=0.35,mark=|,mksize=2](F1,O,F3)
\tkzMarkAngle[size=0.35,mark=|,mksize=2](F2,O,F1)
\tkzMarkAngle[size=0.35,mark=|,mksize=2](G1,O,G3)
\tkzMarkAngle[size=0.35,mark=|,mksize=2](G2,O,G1)
\tkzMarkRightAngle[size=0.2](O,F2,F1)
\tkzMarkRightAngle[size=0.2](O,F3,F1)
\tkzMarkRightAngle[size=0.2](O,G2,G1)
\tkzMarkRightAngle[size=0.2](O,G3,G1)
\tkzDrawPoints[size=3, fill=black](O,A1,A2,F1,F2,F3,G1,G2,G3)
\tkzDrawArc[delta=0,line width=4,opacity=0.2](O,F2)(F3)
\tkzDrawArc[delta=0,line width=4,opacity=0.2](O,G2)(G3)
\tkzLabelAngle[pos=0.5,xshift=0.5em](F2,O,F1){$\beta$}
\tkzLabelPoint[above](A1){$A_1$}
\tkzLabelPoint[right](A2){$A_2$}
\tkzLabelPoint[above right,xshift=6](F1){$F$}
\tkzLabelPoint[above left,xshift=-6](G1){$G$}
\tkzLabelPoint[right](F2){$F'$}
\tkzLabelPoint[above right](F3){$F''$}
\tkzLabelPoint[above left](G2){$G'$}
\tkzLabelPoint[left](G3){$G''$}
\tkzLabelLine[pos=1.3,above](G1,F1){$L_1$}
\tkzLabelCircle[below, yshift=-0.8em](O,A1)(180){$\cC_{t_2}$}
\tkzLabelCircle[below, yshift=-3.8em](O,A1)(180){$\cC_r$}
\draw [gray,decorate,decoration={brace,mirror,amplitude=5,raise=2},xshift=0,yshift=0]
(O) -- (F2) node [black,midway,xshift=-0.5em,yshift=-1em] {$t_2$};
\end{tikzpicture}
\caption{$A_2$ must lie in the highlighted arcs for $P = L_1 \cap L_2$ to lie inside the (dashed) circle $\cC_r$.}
\label{fig:intersec-dist}
\end{figure}

If $P = L_1 \cap L_2$ lies inside $C_r$, then $P$ must be in the segment $GF$.
This is only the case if $A_2$ lie in the arcs $G'G''$ or $F'F''$.
Indeed, recall that $A_2$ is the point in $L_2$ that is closest to $O$, so it must lie in the circle $\cC_{t_2}$.
The probability that $\ell \leq r$ when $\ell_1 = t_1$ and $\ell_2 = t_2$ is then $4\beta/2\pi$, where $\beta = \angle F O F' = \arccos(t_2/r)$.
In other words,
\begin{equation*}
    \prob[\big]{\ell \leq r \given \ell_1 = t_1, \ell_2 = t_2} = \frac{2 \arccos(t_2/r)}{\pi}.
\end{equation*}
Finally, this gives
\begin{align*}
    \prob[\big]{\ell \leq r}
    &= \frac{8}{\pi^2} \int_{0}^{\min\set{1,r}} \int_{0}^{t_2} \frac{\prob[\big]{\ell \leq r \given \ell_1 = t_1, \ell_2 = t_2}}{\sqrt{1 - t_1^2} \sqrt{1 - t_2^2}} \dt_1 \dt_2 \\
    &= \frac{16}{\pi^3} \int_{0}^{\min\set{1,r}} \int_{0}^{t_2} \frac{\arccos(t_2/r)}{\sqrt{1 - t_1^2} \sqrt{1 - t_2^2}} \dt_1 \dt_2 \\
    &= \frac{16}{\pi^3} \int_{0}^{\min\set{1,r}} \frac{\arccos(t_2/r) \arcsin(t_2)}{\sqrt{1 - t_2^2}} \dt_2,
\end{align*}
which completes the proof of \Cref{thm:karamata}.
\qed

\subsection{Analytic proof}
\label{sec:analytic}

For the benefit of further discussion, we will make our analytic proof in higher generality and prove \Cref{thm:analytic}.
Indeed, we take the lines $L_1$ and $L_2$ to be independently drawn $\cG(\mu)$, where $\mu$ is an \emph{atomless} probability measure on $\RR_{\geq 0}$.
In other words, parametrise the lines $L_i$ by their closest point $P_i$ to the origin, written in polar coordinates $P_i = (\ell_i \cos \theta_i, \ell_i \sin \theta_i)$, where $\theta_i$ are independently and uniformly distributed in $[0, 2\pi)$ and that $\ell_i$ are independently sampled from $\mu$.
We write $\ell$ for the distance of $L_1 \cap L_2$ to the origin and our goal is to compute $\prob[\big]{\ell \leq r}$ to obtain \eqref{eq:atomless-transform}.

\begin{proof}[Proof of \Cref{thm:analytic}]
Notice that $\ell \leq \ell_1, \ell_2$, so we may assume $\ell_1, \ell_2 \leq r$ and since $\theta_1$, $\theta_2$, $\ell_1$ and $\ell_2$ are all independent, we may assume that $\ell_1 \leq \ell_2$ and $\theta_1 - \theta_2 \in [0,\pi]$ by the means of conditioning.
In other words, we use the fact that
\begin{align*}
    \prob[\big]{\ell \leq r} &= 2 \prob[\big]{\ell \leq r \given \ell_1 \leq \ell_2\leq r} \\
    &= 4 \prob[\big]{\ell \leq r \given \ell_1 \leq \ell_2 \leq r, \theta_1 - \theta_2 \in [0,\pi]}
\end{align*}
and note that in the first equality, we have used the fact that $\mu$ is atomless.
We now proceed to compute the conditional probability above.

Observe that the lines $L_1$ and $L_2$ are given by
\begin{align*}
    L_1 &= \set[\big]{ (x,y) \in \RR^2 \st x \sin\theta_1 + y \cos\theta_1 = \ell_1}, \\
    L_2 &= \set[\big]{ (x,y) \in \RR^2 \st x \sin\theta_2 + y \cos\theta_2 = \ell_2 }.
\end{align*}
Since we are interested in the distance from $P = L_1 \cap L_2$ to the origin, we can instead consider the distance of $P'= L_1' \cap L_2'$ from the origin, where $L_1'$ and $L_2'$ are obtained by rotation $L_1$ and $L_2$ by an angle of $-\theta_1$.
Thus we have,
\begin{align*}
    L_1' &= \set[\big]{ (x,y) \in \RR^2 \st y = \ell_1}, \\
    L_2' &= \set[\big]{ (x,y) \in \RR^2 \st x \sin(\theta_2-\theta_1) + y \cos(\theta_2-\theta_1) = \ell_2 },
\end{align*}
which gives the intersection point
\begin{equation*}
    P' = \p[\bigg]{ \frac{\ell_1 \cos(\theta_2-\theta_1) - \ell_1}{\sin(\theta_2-\theta_1)}, \ell_2}.
\end{equation*}
Therefore, the distance from $P$ to the origin is given by
\begin{equation}
\label{eq:distance}
    \ell^2
    = \p[\bigg]{ \frac{\ell_1 \cos(\theta_2-\theta_1)  - \ell_1}{\sin(\theta_2-\theta_1) }}^2 +  \ell_2^2
    = \frac{\ell_1^2 + \ell_2^2 - 2\ell_1 \ell_2 \cos(\theta_2-\theta_1) }{\p[\big]{\sin(\theta_2-\theta_1)}^2}.
\end{equation}
We want to understand which range of values of $\theta_1 - \theta_2$ gives us $\ell^2 \leq r^2$, that is
\begin{equation}
\label{eq:cos-ineq}
    r^2\p[\big]{\cos(\theta_1 - \theta_2)}^2 - 2\ell_1 \ell_2 \cos(\theta_1 - \theta_2) - r^2 + \ell_1^2 + \ell_2^2 \leq 0.
\end{equation}
Observe that this is a quadratic inequality in $\cos(\theta_1 - \theta_2)$, and that the roots of the associated quadratic polynomial are given by
\begin{equation*}
    \frac{\ell_1\ell_2}{r^2} \pm \sqrt{\p[\Big]{1 - \frac{\ell_1^2}{r^2}}\p[\Big]{1 - \frac{\ell_2^2}{r^2}}}.
\end{equation*}
Thus, inequality \eqref{eq:cos-ineq} is equivalent to
\begin{equation*}
    \p[\Big]{\cos(\theta_1 - \theta_2) - \frac{\ell_1 \ell_2}{r^2}}^2 \leq \p[\Big]{1 - \frac{\ell_1^2}{r^2}}\p[\Big]{1 - \frac{\ell_2^2}{r^2}}.
\end{equation*}
If we write $\ell_1 = r \cos\alpha_1$, $\ell_2 = r \cos\alpha_2$, for $0 \leq \alpha_2 \leq \alpha_1 \leq \pi/2$, then this becomes
\begin{equation*}
    \abs[\big]{\cos(\theta_1 - \theta_2) - \cos\alpha_1\cos\alpha_2} \leq \sin\alpha_1 \sin\alpha_2
\end{equation*}
which is equivalent to
\begin{equation}
\label{eq:cos-ineq-2}
   \cos(\alpha_1 + \alpha_2) \leq \cos(\theta_1 - \theta_2) \leq \cos(\alpha_1 - \alpha_2).
\end{equation}
Recall that $\ell_1 \leq \ell_2$, so $\alpha_2 \leq \alpha_1$.
Note that we assumed that $\theta_1 - \theta_2 \in [0,\pi]$.
Therefore, the inequality \eqref{eq:cos-ineq-2} is equivalent to
\begin{equation*}
    \alpha_1 - \alpha_2 \leq \theta_1 - \theta_2 \leq \alpha_1 + \alpha_2.
\end{equation*}
The probability over $\theta_1$ and $\theta_2$ that $\ell \leq r$, given $\ell_1$ and $\ell_2$, is
\begin{align*}
    \frac{1}{4\pi^2} \int_{0}^{2\pi} \int_{0}^{2\pi} \ind_{\set{\alpha_1 - \alpha_2 \leq \theta_1 - \theta_2 \leq \alpha_1 + \alpha_2}} \d\theta_1 \d\theta_2
    = \frac{2\alpha_2}{2\pi} = \frac{\arccos(\ell_2/r)}{\pi}.
\end{align*}
Finally, we have
\begin{align*}
    \prob[\big]{\ell \leq r}
    &= 4 \prob[\big]{\ell \leq r \given \ell_1 \leq \ell_2 \leq r, \theta_1 - \theta_2 \in [0,\pi]} \\
    &= \frac{4}{\pi} \int_{0}^{r} \int_{0}^{\ell_2} \arccos(\ell_2/r) \d\mu(\ell_1) \d\mu(\ell_2) \\
    &= \frac{4}{\pi} \int_{0}^{r} \mu\p[\big]{[0,t]}\arccos(t/r) \d\mu(t),
\end{align*}
as claimed in \eqref{eq:atomless-transform}.
\end{proof}

Suppose that $F(x) = \mu\p[\big]{[0,x]}$ is the cumulative density function of $\mu$.
From \Cref{thm:analytic}, we get a recipe that transforms a probability measure on $\mu$ on $\RR_{\geq 0}$ into a new probability measure with cumulative density function $\cI(F)$ defined as
\begin{equation}
\label{eq:integral-transform-cdf}
    \cI(F)(x) \defined \frac{4}{\pi} \int_0^x F(t) F'(t) \arccos(t/x) \dt,
\end{equation}
assuming of course that $F'(x)$ exists.
Applying the Leibniz integral rule, we can obtain an integral transform that acts on probability density functions $\rho$ instead, given by
\begin{equation}
\label{eq:integral-transform-pdf}
    \cJ(\rho)(r)
    \defined \frac{4}{\pi r^2} \int_{0}^{r} \frac{ \p[\big]{\int_0^t \rho(y) \dy} \rho(t) t}{\sqrt{1 - t^2/r^2}} \dt.
\end{equation}

The same map can also be thought as sending a rotationally invariant probability measure $\mu$ on $\Graff_1(\RR^2)$ (which puts zero probability to lines through the origin) to a new one, defined by sampling two independent lines $L_1$ and $L_2$, looking at their intersection point $p$, and producing the line in $\Graff_1(\RR^2)$ through $p$ in such a way that the distance from said line to the origin is attained exactly at $p$.

The integral transforms defined by \eqref{eq:integral-transform-cdf} and \eqref{eq:integral-transform-pdf} appears to be new and gives a quick route to obtain \Cref{thm:karamata}.
Indeed, from \eqref{eq:cumulative}, we know that $\prob[\big]{\ell \leq r} = \cI(F)(r)$ where $F(x)$ is defined as
\begin{alignat*}{2}
    F(x) = \begin{cases}
        \frac{2 \arcsin x}{\pi} & \text{if $0 \leq x \leq 1$,} \\
        1 & \text{if $x \geq 1$.}
    \end{cases} & \; \implies \; &
    F'(x) = \begin{cases}
        \frac{2}{\pi \sqrt{1 - x^2}} & \text{if $0 \leq x \leq 1$,} \\
        0 & \text{if $x \geq 1$.}
    \end{cases}
\end{alignat*}
Thus, we have
\begin{align*}
    \prob[\big]{\ell \leq r} &= \frac{4}{\pi} \int_0^r F(t) F'(t) \arccos(t/r) \dt \\
    &= \frac{16}{\pi^3} \int_0^{\min\set{1,r}} \frac{\arccos(t/r)\arcsin(t)}{\sqrt{1 - t^2}} \dt.
\end{align*}

We remark that the condition on \Cref{thm:analytic} that the measure is atomless can be circumvented easily.
Indeed, we only used the atomless condition when we wrote that $\prob[\big]{\ell \leq r} = 2 \prob[\big]{\ell \leq r \given \ell_1 \leq \ell_2}$.
However, the following holds for general $\mu$:
\begin{equation*}
    \prob[\big]{\ell \leq r} = 2 \prob[\big]{\ell \leq r \given \ell_1 < \ell_2\leq r} + \prob[\big]{\ell \leq r \given \ell_1 = \ell_2\leq r}.
\end{equation*}

We conclude this session with the surprising result that says that $\ell$ is not integrable unless $\mu$ is trivial.

\begin{proof}[Proof of \Cref{cor:integrability}]
Recall in the proof of \Cref{thm:analytic} that we have computed the distance $\ell$ as a function of $\theta_1$,  $\theta_2$, $\ell_1$ and $\ell_2$, in equation \eqref{eq:distance}, as
\begin{equation*}
    \ell = \frac{\sqrt{\vphantom{\int}\ell_1^2 + \ell_2^2 - 2\ell_1 \ell_2 \cos(\theta_2-\theta_1)} }{\abs[\big]{\sin(\theta_2-\theta_1)}}.
\end{equation*}
By assumption, we have $\mu\p[\big]{(0,\infty)} > 0$, which implies that for some $a > 0$, we have that $\mu\p[\big]{[a,\infty)} > 0$.
Also note that, if $\ell_1, \ell_2 \geq a$, then
\begin{equation*}
    \sqrt{\ell_1^2 + \ell_2^2 - 2\ell_1 \ell_2 \cos(\theta_2-\theta_1)} \geq \ell_1 + \ell_2 \geq 2a.
\end{equation*}
Therefore, we have
\begin{align*}
    \Expec \ell &\geq \mu\p[\big]{[a,\infty)}^2 \cdot \expec[\big]{ \ell \given \ell_1,\ell_2 \geq a} \\
    &\geq \mu\p[\big]{[a,\infty)}^2 \cdot \expec[\Bigg]{ \frac{2a}{\abs[\big]{\sin(\theta_2-\theta_1)}} \given \ell_1,\ell_2 \geq a} \\
    &= \mu\p[\big]{[a,\infty)}^2 \cdot \frac{1}{2\pi} \int_{0}^{2\pi}\frac{2a}{\abs[\big]{\sin \gamma}} \d \gamma \\
    &\geq \mu\p[\big]{[a,\infty)}^2 \cdot \frac{a}{\pi} \int_{0}^{2\pi}\frac{1}{\gamma} \d \gamma,
\end{align*}
where we have used that $\abs{\sin x} \leq x$ for $x \geq 0$.
The result follows since we have $a > 0$, $\mu\p[\big]{[a,\infty)} > 0$ and $\int_0^{\eps} \dt / t = \infty$ for all $\eps > 0$.
\end{proof}


\section{Bertrand's paradox and beyond}
\label{sec:bertrand}

Recall that the usual discussion around Bertrand's  paradox suggest three different ways of sampling a random chord in a unit circle centered at the origin:
\begin{enumerate}
    \item[$I$] (Uniform Radius) A chord is selected such that its distance $\ell_{I}$ from the origin is uniform on the interval $[0,1]$.
    \item[$II$] (Uniform Midpoint) A chord is formed such that the midpoint of the chord is uniform on the unit disk.
    Write $\ell_{II}$ for the distance of this chord to the origin.
    \item[$III$] (Uniform Endpoints) A chord is formed by selecting the endpoints independently and uniformly in the unit circle. Write $\ell_{III}$ for the distance of this chord to the origin.
\end{enumerate}
The cumulative density functions of $\ell_{I}$, $\ell_{II}$ and $\ell_{III}$ are then, respectively,
\begin{align*}
    F_{I}(r) &= \prob[\big]{\ell_{I} \leq r} = r, \\
    F_{II}(r) &= \prob[\big]{\ell_{II} \leq r} = r^2, \\
    F_{III}(r) &= \prob[\big]{\ell_{III} \leq r} = \frac{2}{\pi} \arcsin r,
\end{align*}
when $0 \leq r \leq 1$.
Set $\mu_I$, $\mu_{II}$ and $\mu_{III}$ to be the probability measures on $\RR_{\geq 0}$ whose cumulative density functions are respectively $F_{I}$, $F_{II}$ and $F_{III}$.
Extending these chords to lines, these three measures correspond to rotationally symmetric probability measures $\cG(\mu_{I})$, $\cG(\mu_{II})$ and $\cG(\mu_{III})$ on $\Graff_1(\RR^2)$.

It is natural to ask what is the distribution of the intersection point of two independent random lines selected in each of these three ways.
If we choose the measure $\cG(\mu_{III})$, then we already know the answer from \Cref{thm:karamata}, as this is equivalent to the Karamata problem.

\begin{corollary}
\label{cor:bertrand}
For $j \in \set{I,II,II}$, write $\Prob_{j}\p[\big]{\ell \leq r}$ for the probability that $\ell \leq r$, where $\ell$ is the distance from the origin to the intersection point $L_1 \cap L_2$, when $L_1$ and $L_2$ are independent random lines sampled from $\cG(\mu_{j})$.
For $0 \leq r \leq 1$, we have
\begin{equation*}
    \Prob_{I}\p[\big]{\ell \leq r} = \frac{r^2}{2}, \; \Prob_{II}\p[\big]{\ell \leq r} = \frac{3r^4}{8}, \; \text{and } \; \Prob_{III}\p[\big]{\ell \leq r} = \frac{2}{\pi^2} \Li_2(r^2),
\end{equation*}
while for $r \geq 1$, we have
\begin{align*}
    \Prob_{I}\p[\big]{\ell \leq r} &= \frac{2\arccos(1/r)}{\pi} + \frac{r^2\arcsin(1/r)}{\pi} - \frac{\sqrt{r^2 - 1}}{\pi}, \\[0.5em]
    \Prob_{II}\p[\big]{\ell \leq r} &= \frac{2 \arccos(1/r)}{\pi} + \frac{3 r^4 \arcsin(1/r)}{4\pi} - \frac{(3r^2 + 2) \sqrt{r^2 - 1}}{4\pi}, \\[0.8em]
    \Prob_{III}\p[\big]{\ell \leq r} &= \frac{16}{\pi^3}\int_{0}^{1} \frac{ \arccos(t/r) \arcsin(t)}{\sqrt{1-t^2}} \dt.
\end{align*}
\end{corollary}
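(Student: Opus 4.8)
The plan is to apply the integral transform $\cI$ from \eqref{eq:integral-transform-cdf} (equivalently, \Cref{thm:analytic}) to each of the three cumulative density functions $F_{I}$, $F_{II}$, $F_{III}$. For $j = III$ there is essentially nothing new to do: since $F_{III}'$ vanishes for $t > 1$, the effective upper limit of integration is $\min\set{1,r}$, and substituting $F_{III}$ into \eqref{eq:integral-transform-cdf} reproduces exactly the integral already computed in the proof of \Cref{thm:karamata}, now specialised to $\min\set{1,r} = 1$ when $r \geq 1$. This is consistent with the fact that $\cG(\mu_{III})$ is precisely the Karamata setup. So the real work is confined to the cases $I$ and $II$.

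For those, I record that $F_{I}(t) = t$, $F_{I}'(t) = 1$ and $F_{II}(t) = t^2$, $F_{II}'(t) = 2t$ on $[0,1]$, with both derivatives vanishing for $t > 1$. Feeding these into \eqref{eq:integral-transform-cdf} gives
\[
    \Prob_{I}(\ell \leq r) = \frac{4}{\pi}\int_{0}^{\min\set{1,r}} t\,\arccos(t/r)\dt,
    \qquad
    \Prob_{II}(\ell \leq r) = \frac{8}{\pi}\int_{0}^{\min\set{1,r}} t^3\,\arccos(t/r)\dt.
\]
Both integrals yield to a single integration by parts that differentiates $\arccos(t/r)$, producing the factor $-1/\sqrt{r^2 - t^2}$, and integrates the power of $t$; the boundary term vanishes at $t = 0$ and contributes an $\arccos(1/r)$-type term at the upper endpoint.

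I would then split into the two regimes. For $0 \leq r \leq 1$ the upper limit is $r$, and the substitution $t = r u$ turns each integral into $r^{2k}\int_{0}^{1} u^{2k-1}\arccos u\du$ (with $k = 1$ for $I$ and $k = 2$ for $II$); integrating by parts once reduces this to the Wallis integrals $\int_{0}^{1} u^{2}/\sqrt{1-u^2}\du = \pi/4$ and $\int_{0}^{1} u^{4}/\sqrt{1-u^2}\du = 3\pi/16$, which deliver $r^2/2$ and $3r^4/8$ at once. For $r \geq 1$ the upper limit is $1$, and after the integration by parts the leftover integral $\int_{0}^{1} t^{2k}/\sqrt{r^2 - t^2}\dt$ is computed by the substitution $t = r\sin\phi$, reducing it to $r^{2k}\int_{0}^{\arcsin(1/r)}\sin^{2k}\phi\,\d\phi$. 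Expanding $\sin^2\phi$ and $\sin^4\phi$ through the double- and quadruple-angle formulas, integrating termwise, and re-expressing $\sin\!\p[\big]{2\arcsin(1/r)}$ and $\sin\!\p[\big]{4\arcsin(1/r)}$ in terms of $\sqrt{r^2-1}$ and $r$ produces exactly the stated closed forms.

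The only delicate point is the bookkeeping in the $r \geq 1$ case for $\mu_{II}$: the $\sin^4$ integral forces the quadruple-angle expansion, and one must carefully combine the three resulting $\sqrt{r^2-1}$ contributions into the single coefficient appearing in $-(3r^2 + 2)\sqrt{r^2-1}/(4\pi)$. Everything else is routine. As a sanity check one verifies that the two regimes agree at $r = 1$, where $\arccos(1/r) = 0$, $\arcsin(1/r) = \pi/2$ and $\sqrt{r^2-1} = 0$, so that the $r \geq 1$ formulas collapse to $1/2$ and $3/8$, matching the $r \leq 1$ expressions.
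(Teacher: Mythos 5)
Your proposal is correct and follows essentially the same route as the paper: reduce via \Cref{thm:analytic} and \eqref{eq:integral-transform-cdf} to the integrals $\frac{4}{\pi}\int_0^{\min\set{1,r}} t\arccos(t/r)\dt$ and $\frac{8}{\pi}\int_0^{\min\set{1,r}} t^3\arccos(t/r)\dt$, dispatch case $III$ by \Cref{thm:karamata}, and evaluate by integration by parts. The only difference is bookkeeping — the paper records the closed-form antiderivatives of $t\arccos(t/r)$ and $t^3\arccos(t/r)$ once and evaluates them at both endpoints, whereas you split into the two regimes and use Wallis integrals, respectively a trigonometric substitution; both land on the same formulas, as your check at $r=1$ confirms.
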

\begin{proof}
We will make use of the following integrals, which can be obtained via integration by parts.
For all $0 \leq x \leq r$, we have
\begin{alignat*}{2}
    \int_{0}^{x} t \arccos(t/r) \dt &= \frac{ x^2 \arccos(x/r)}{2} &&+ \frac{r^2\arcsin(x/r)}{4} - \frac{x \sqrt{r^2 - x^2}}{4} \\
    \int_{0}^{x} t^3 \arccos(t/r) \dt &= \frac{x^4 \arccos(x/r)}{4} &&+ \frac{3r^4\arcsin(x/r)}{32} \\
    &&&- \frac{x(3r^2 + 2x^2) \sqrt{r^2 - x^2}}{32}.
\end{alignat*}
As $\arccos(1) = \arcsin(0) = 0$ and $\arcsin(1) = \pi/2$, this gives
\begin{equation*}
    \int_{0}^{r} t \arccos(t/r) \dt = \frac{\pi r^2}{8}, \;\; \text{and} \;\;
    \int_{0}^{r} t^3 \arccos(t/r) \dt = \frac{3\pi r^4}{64}.
\end{equation*}

We now proceed with the proof.
From \Cref{thm:analytic} and \eqref{eq:integral-transform-cdf}, we have that
\begin{align*}
    \Prob_{I}\p[\big]{\ell \leq r} &= \cI(F_{I})(r) = \frac{4}{\pi} \int_0^x F_I(t) F'_I(t) \arccos(t/x) \dt \\
    &= \frac{4}{\pi} \int_0^{\min\set{1,r}} t \arccos(t/r) \dt.
\end{align*}
Therefore, for $0 \leq r \leq 1$, we have
\begin{align*}
    \Prob_{I}\p[\big]{\ell \leq r} = \frac{4}{\pi} \int_0^{r} t \arccos(t/r) \dt = \frac{4}{\pi} \cdot \frac{\pi r^2}{8} = \frac{r^2}{2},
\end{align*}
while for $r \geq 1$, we have
\begin{equation*}
    \Prob_{I}\p[\big]{\ell \leq r} = \frac{2\arccos(1/r)}{\pi} + \frac{r^2\arcsin(1/r)}{\pi} - \frac{\sqrt{r^2 - 1}}{\pi}.
\end{equation*}

Similarly, we have that
\begin{align*}
    \Prob_{II}\p[\big]{\ell \leq r} =\frac{8}{\pi} \int_0^{\min\set{1,r}} t^3 \arccos(t/r) \dt.
\end{align*}
Therefore, for $0 \leq r \leq 1$, we have
\begin{align*}
    \Prob_{II}\p[\big]{\ell \leq r} = \frac{8}{\pi} \int_0^{r} t^3 \arccos(t/r) \dt = \frac{8}{\pi} \cdot \frac{3 \pi r^4}{64} = \frac{3r^4}{8},
\end{align*}
while for $r \geq 1$, we have
\begin{equation*}
    \Prob_{II}\p[\big]{\ell \leq r} = \frac{2 \arccos(1/r)}{\pi} + \frac{3r^4\arcsin(1/r)}{4\pi} - \frac{(3r^2 + 2) \sqrt{r^2 - 1}}{4\pi}.
\end{equation*}

The value of $\Prob_{III}\p[\big]{\ell \leq r}$ follows from \Cref{thm:karamata}.
\end{proof}

So far we have only considered cases where the line must necessarily intersect the unit disk.
One natural distribution to consider is the one obtained by considering a point $(x,y)$ where both $x$ and $y$ are independent standard Gaussians.
The distance of $(x,y)$ to the origin then follows a Rayleigh distribution, whose cumulative density function and probability density function are given by:
\begin{alignat*}{2}
    F_{IV}(x) = 1 - e^{-x^2/2}  \; \text{and} \;  F'_{IV}(x) = x e^{-x^2/2}.
\end{alignat*}
Therefore, we have
\begin{align}
\nonumber
    \Prob_{IV}\p[\big]{\ell \leq r} &= \frac{4}{\pi} \int_0^r F_{IV}(t) F'_{IV}(t) \arccos(t/r) \dt \\
\label{eq:gaussian-integral}
    &= \frac{4}{\pi} \int_0^r \p[\big]{1 - e^{-t^2/2}} t e^{-t^2/2} \arccos(t/r) \dt.
\end{align}
See \Cref{fig:bertrand} for a comparison of the four probability measures found in this section.

\begin{figure}[!ht]
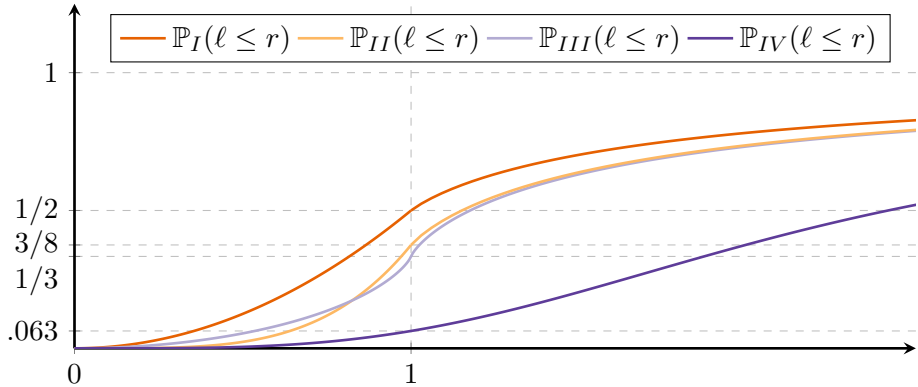

\centering

\caption{Different distributions obtained by changing the method of choosing a random line.}
\label{fig:bertrand}
\end{figure}

It turns out that the integral in \eqref{eq:gaussian-integral} can also be evaluated in terms of known functions, in this case, of the (modified) Bessel function of the first kind $I_0(x) \from \RR \to \RR$ defined by the convergent series
\begin{equation*}
    I_0(x) \defined \sum_{k=0}^{\infty} \frac{(x^2/4)^k}{(k!)^2}.
\end{equation*}
The reader will be spared from the long computation, which can be done by expanding $\arccos$ in series integrating term by term\footnote{Or just type \texttt{(4/Pi)Integrate[(1 - E\^{}(-t\^{}2/2))tE\^{}(-t\^{}2/2) ArcCos[t/r], \{t, 0, r\}]} in Mathematica or another computer algebra system.}, which with some effort leads to
\begin{equation*}
    \Prob_{IV}\p[\big]{\ell \leq r} = 1 - 2 e^{-r^2/4} I_0\p[\big]{r^2/4} + e^{-r^2/2} I_0\p[\big]{r^2/2}.
\end{equation*}

\Cref{fig:bertrand} highlights how different notions of \emph{random chord} leads to different distributions for the intersection point.
The first three methods only produce lines that intersect the unit circle, so it is not surprising that their intersection point is more likely to attain smaller values than the Gaussian case.
Nevertheless, \Cref{cor:integrability} implies that $\Expec \ell = \infty$ in all these four cases.


\section{An analytic identity}
\label{sec:identity}

In this section we prove that the expression \eqref{eq:karamata-extended} in \Cref{thm:karamata} indeed simplifies to the dilogarithm whenever $0 \leq r \leq 1$.
In other words, we show that for $0 \leq r \leq 1$,
\begin{align*}
    \prob[\big]{\ell \leq r}
    &= \frac{16}{\pi^3}\int_0^{\min\set{1,r}} \frac{ \arccos(t/r) \arcsin(t)}{\sqrt{1-t^2}} \dt = \frac{2}{\pi^2} \Li_2(r^2).
\end{align*}
To simplify the integral above, we integrate by parts taking $v(x) = (\arcsin x)^2/2$ and $u(x) = \arccos(x/r)$, so $u'(x) = -1 / \sqrt{r^2 - t^2}$ and $v'(x) = \arcsin x / \sqrt{1 - x^2}$.
Since $v(0) = 0$ and $u(r) = 0$, we have
\begin{align*}
    \int_{0}^{r} \frac{ \arccos(t/r) \arcsin(t)}{\sqrt{1-t^2}} \dt
    &= \frac{1}{2}\int_0^r \frac{(\arcsin t)^2}{\sqrt{r^2 - t^2}}\dt
    = \frac{1}{2} \int_0^1 \frac{\p[\big]{\arcsin(rt)}^2}{\sqrt{1-t^2}}\dt.
\end{align*}

To finalize the computation, we recall two standard facts.
The first is the Taylor series for the $(\arcsin x)^2$ function, valid for $\abs{x} \leq 1$, which is given by
\begin{align*}
    (\arcsin x)^2 = \frac{1}{2} \sum_{n=1}^{\infty} \frac{(2x)^{2n}}{n^2 \binom{2n}{n}}.
\end{align*}
The second are the even values of the Wallis integral, namely,
\begin{align*}
    \int_{0}^{1} \frac{x^{2n}}{\sqrt{1-x^2}}\dx = \int_{0}^{\pi/2} (\cos x)^{2n} \dx  = \frac{\pi}{2} \cdot \frac{\binom{2n}{n}}{4^n}.
\end{align*}
Together, we obtain that
\begin{align*}
    \int_{0}^{1} \frac{\p[\big]{\arcsin(rt)}^2}{\sqrt{1-t^2}} \dt
    &= \frac{1}{2} \int_{0}^{1} \sum_{n=1}^{\infty} \frac{(2rt)^{2n}}{n^2 \binom{2n}{n} \sqrt{1-t^2}} \du \\
&= \frac{1}{2} \sum_{n=1}^{\infty} \frac{(2r)^{2n}}{n^2 \binom{2n}{n}} \int_{0}^{1} \frac{t^{2n}}{\sqrt{1-t^2}} \dt \\
&= \frac{\pi}{4} \sum_{n=1}^{\infty} \frac{r^{2n} }{n^2} = \frac{\pi}{4} \Li_2(r^2).
\end{align*}

Putting all together, we have, for $0 \leq r \leq 1$, that
\begin{align*}
    \prob[\big]{\ell \leq r}
    &= \frac{16}{\pi^3}\int_{0}^{r} \frac{ \arccos(t/r) \arcsin(t)}{\sqrt{1-t^2}} \dt \\
    &= \frac{16}{\pi^3} \cdot \frac{1}{2}\int_{0}^{1} \frac{\p[\big]{\arcsin(rt)}^2}{\sqrt{1-t^2}} \dt \\
    &= \frac{8}{\pi^3} \cdot \frac{\pi}{4} \Li_2(r^2) = \frac{2}{\pi^2} \Li_2(r^2).
\end{align*}


\section{Further directions}
We discuss a few avenues that remain unexplored.

\subsection{Hyperbolic volumes}

The dilogarithm function is closely related to the volume of ideal tetrahedra in hyperbolic space (see Section 4 of Zagier~\cite{Zagier2007-tc}).
This suggest the possibility of obtaining \Cref{thm:karamata}, say when $0 \leq r \leq 1$, directly recovered geometrically from this interpretation.

\subsection{Special values}

We can compute $\prob[\big]{\aff(X_1,X_2)\cap\aff(X_3,X_4) \in S}$ explicitly for some shapes $S \subseteq \RR^2$ from equation \eqref{eq:explicit}.
Indeed, we can do so for disks centered at the origin, cones with a vertex at the origin, as well as boolean combinations of these shapes.
Are there any other simple shapes, such as squares, triangles or ellipses, such that this probability can also be expressed in terms of known functions?
Can our new probabilistic interpretation of the dilogarithm lead to new explicit values of $\Li_2(x)$?

\subsection{Polylogarithms} Is there a related probabilistic interpretation to the polylogarithm functions $\Li_m(z)$, defined as $\sum_{n = 1}^{\infty} z^n / n^m$ for $\abs{z} < 1$?

\subsection{Higher dimensions}

Write $\Graff(\RR^d)$ for the set of all affine subspaces of $\RR^d$ and $\Graff_k(\RR^d)$ for those subspaces of dimension $k$.
For $X,Y \in \Graff(\RR^d)$, write $X \meet Y$ for their intersection and $X \join Y$ for the affine span of $X$ and $Y$.
This this turns $\Graff(\RR^d)$ into a lattice.
While there are some natural invariant measures on $\Graff(\RR^d)$, see for instance Klain and Rota~\cite{Klain1997-ux}, they are not probability measures since $\Graff(\RR^d)$ is not a compact space.

What we have explored in our work boils down to understanding the distribution of $(\cX_{1,1} \join \cX_{1,2}) \meet (\cX_{2,1} \join \cX_{2,2})$, where $\cX_{1,1}$, $\cX_{1,2}$, $\cX_{2,1}$ and $\cX_{2,2}$ are independent random elements of $\RR^2 \cong \Graff_0(\RR^2)$ with same distribution $\eta$.
In the case of \Cref{thm:karamata}, $\eta$ is the uniform distribution on $\SS^1 \subseteq \RR^2$, while in \Cref{sec:bertrand}, we also considered the case where $\eta$ is the Gaussian distribution on $\RR^2$.

We propose a three dimensional version of \Cref{thm:karamata} as follows.
Let $\cX_{1,1}$, $\cX_{1,2}$, $\cX_{1,3}$, $\cX_{2,1}$, $\cX_{2,2}$, $\cX_{2,3}$, $\cX_{3,1}$, $\cX_{3,2}$ and $\cX_{3,3}$ be independently and uniformly distributed in $\SS^2 \subseteq \RR^3 \cong \Graff_0(\RR^3)$ and compute the distribution of
\begin{equation*}
    \p[\big]{\cX_{1,1} \join \cX_{1,2} \join \cX_{1,3}}
    \meet \p[\big]{\cX_{2,1} \join \cX_{2,2} \join \cX_{2,3}}
    \meet \p[\big]{\cX_{3,1} \join \cX_{3,2} \join \cX_{3,3}}.
\end{equation*}
The natural $d$-dimensional generalization then would be about
\begin{equation}
\label{eq:higher-dimensions}
    \p[\big]{\cX_{1,1} \join \dotsb \join \cX_{1,d}} \meet \dotsb \meet \p[\big]{\cX_{d,1} \join \dotsb \join \cX_{d,d}},
\end{equation}
where $\cX_{i,j}$, $1 \leq i,j \leq d$, are taken independently and uniformly form $\SS^{d-1} \subseteq \RR^d$.
It is also interesting to consider other probability measures for $\cX_{i,j}$, such as the $d$-dimensional Gaussian measure.

More abstractly, if we are given a probability measure on $\Graff(\RR^d)$ and any expression such as $(X_1 \meet X_2) \join (X_3 \meet X_4 \meet X_5) \join X_6$, we can build a new measure on $\Graff(\RR^d)$ by substituting each variable $X_i$ by a independent random element $\cX_i$ sampled from our initial measure on $\Graff(\RR^d)$.
This leads to many variations that could be of interest.
For instance, understanding the distance from the origin to the $(d-2)$-dimensional affine subspace
\begin{equation*}
    \p[\big]{\cX_{1,1} \join \dotsb \join \cX_{1,d}}  \meet \p[\big]{\cX_{2,1} \join \dotsb \join \cX_{2,d}}
\end{equation*}
in $\RR^d$ could be an important stepping stone towards understanding \eqref{eq:higher-dimensions}.

\subsection{Extremal questions}
In \Cref{thm:karamata}, we compute the distribution of the distance $\ell$ of $\aff(X_1, X_1') \cap \aff(X_2, X_2')$ to the origin, where $X_1$, $X_1'$, $X_2$ and $X_2'$ are independent random points chosen uniformly on the circle $\SS^1 \subseteq \RR^2$.
What if they are all chosen according to another probability measure $\nu$ in $\SS^1$ instead?
Given a fixed $r \geq 0$, how large can $\prob[\big]{\ell \leq r}$ be if we can choose $\nu$ as we like?
Which measures $\nu$, if any, attain such bound?
This time, \Cref{thm:analytic} is not applicable since we no longer have rotational invariance.
These problems were considered by the first author and Ramos in~\cite{Bortolotto2024-oq}, where for instance, they show that any non-atomic $\nu$ is an extremizer for $r = 1$.


\section*{Acknowledgements}

The authors are grateful to Emmanuel Kowalski for showing us the original manuscript of Karamata and thus sparking our interest in the subject.
We are also grateful for the many fruitful discussions with João Pedro Ramos.

The second author is partially supported by ERC Starting Grant 101163189 and UKRI Future Leaders Fellowship MR/X023583/1.


\emergencystretch=1em

\printbibliography


\end{document}